\title{The Enumerative Geometry of Hyperplane Arrangements}
\author{
Thomas Paul, Will Traves and Max Wakefield \thanks{Department of Mathematics, U.S. Naval Academy,
traves@usna.edu, wakefiel@usna.edu}}
\date{}
\theoremstyle{plain}
\newtheorem{theorem}{Theorem}
\newtheorem{lemma}[theorem]{Lemma}
\theoremstyle{definition}
\newtheorem{definition}[theorem]{Definition}
\newtheorem{example}[theorem]{Example}
\newtheorem{remark}[theorem]{Remark}
\newtheorem{question}[theorem]{Question}
\newcommand{\A}{\mathcal{A}}
\newcommand{\M}{\mathcal{M}}
\newcommand{\N}{N}
\newcommand{\NN}{\mathbb{N}}
\renewcommand{\L}{\mathcal{L}}
\newcommand{\MLA}{\M_{\A}}
\newcommand{\C}{\mathbb{C}}
\newcommand{\Z}{\mathbb{Z}}
\newcommand{\G}{\mathbb{G}}
\renewcommand{\P}{\ensuremath \mathbb{P}}
\renewcommand{\S}{\ensuremath \mathcal{S}}
\newcommand{\codim}{\text{codim}}
\begin{document}
\maketitle

\begin{abstract}

We study enumerative questions on the moduli space $\M(L)$ of hyperplane arrangements with a given intersection lattice $L$. Mn\"ev's universality theorem suggests that these moduli spaces can be arbitrarily complicated; indeed it is even difficult to compute the dimension $D =\dim \M(L)$. Embedding $\M(L)$ in a product of projective spaces, we study the degree $N=\deg \M(L)$, which can be interpreted as the number of arrangements in $\M(L)$ that pass through $D$ points in general position. For generic arrangements $N$ can be computed combinatorially and this number also appears in the study of the Chow variety of zero dimensional cycles. We compute $D$ and $N$ using Schubert calculus in the case where $L$ is the intersection lattice of the arrangement obtained by taking multiple cones over a generic arrangement. We also calculate the characteristic numbers for families of generic arrangements in $\P^2$ with 3 and 4 lines.

\end{abstract}

\section{Introduction}

Enumerative geometry has a monumental history and continues to be an inspiration for many different fields of research (for example see Katz \cite{Katz} and Kontsevich and Manin \cite{KM-94}). Solutions to enumerative problems have given deep insight into the geometric nature of various algebraic varieties and important spaces.

Realization spaces of matroids (or moduli spaces of hyperplane arrangements) give a beautiful connection between combinatorics and algebraic geometry. In particular, Mn\"{e}v's universality theorem presents matroids as a kind of dictionary for quasi-projective varieties (see Mn\"{e}v \cite{Mnev} or Vakil \cite{VakilMurphy}). These realization spaces have been studied from many different viewpoints. Kapranov \cite{Kapranov-93} showed the Hilbert compactification of the moduli space of generic arrangements could be viewed as a Chow quotient. Hacking, Keel, and Tevelev \cite{HKT-06} applied the relative minimal model program to the moduli space of arrangements. Terao \cite{Terao-02} studied the closure of this moduli space in a product of projective spaces and an associated logarithmic Gauss-Manin connection. Speyer \cite{Speyer-09} proved that the $K$-theory class (actually a push forward-pullback) of the inclusion of the moduli space into the appropriate Grassmannian is actually the 2-variable Tutte polynomial of the associated matroid. Then Fink and Speyer \cite{Fink-Speyer-12} generalized this result to non-realizable matroids. In this note we study some of the geometry of this moduli space and find another use of the Tutte polynomial.

One of our motivating problems is Terao's conjecture which concerns the subset of the realization consisting of free arrangements (see Orlik and Terao \cite{OT} for a general reference on hyperplane arrangements). Yuzvinsky \cite{Yuz-93} showed that this subset was Zariski-open. It is not known if this subset is also closed. Our focus here is on computing the dimension and essentially the degree of this realization space by using both combinatorial and geometric methods.

To each hyperplane arrangement $\A = \{H_1, \ldots, H_k\}$ in $\P^ n$
we associate an {\em intersection lattice} $\L(\A)$, the poset whose
elements are intersections $H_{i_1} \cap \cdots \cap H_{i_t}$ ordered
by reverse inclusion, $B \leq C \iff C \subseteq B$. Two arrangements
$\A$ and $\A'$ are {\em combinatorially equivalent} if their
intersection lattices are isomorphic, that is, if there is a bijection
$\phi: \A \rightarrow \A'$ that preserves the lattice order, $B \leq C
\iff \phi(B) \leq \phi(C)$. Let $\M_\A$ be the set of arrangements
that are combinatorially equivalent to the arrangement
$\A$. Identifying each arrangement $\{H_1, \ldots, H_k\}$ with its orbit
$\{(H_{\sigma(1)},\ldots, H_{\sigma(k)}):\; \sigma \in \S_k\}$ under the permutation group
$\S_k$, we obtain an embedding $\M_\A \hookrightarrow
(\P^{n*})^k/\S_k$ and we give $\M_\A$ the induced topology coming from
the Zariski topology on the quotient space. It is clear that $\M_\A$ depends only on the intersection lattice of $\A$. If $D = \dim
\M_\A$ is the dimension of $\M_\A$, then let $N_\A$ be the number of
arrangements in $\M_A$ passing through $D$ points in general
position in $\P^n$.

\begin{question} \label{mainq} Given $\A$ compute $D$ and $\N_\A$. Ideally these
  answers should be given in terms of the combinatorics of the lattice
  $\L(\A)$.
\end{question}

In the next section we show that when $\A$ is a generic arrangement of
$k$ hyperplanes in $\P^n$, then $\dim \M_\A = kn$ and we compute
$\N_\A$. The characteristic number $N_\A(p,\ell)$ measures the
number of arrangements combinatorially equivalent to $\A$ that pass
through $p$ points and are tangent to $\ell$ lines in general position
(with $p + \ell = \dim \M_\A$). We use intersection theory on the correspondence
between hyperplane arrangements and their dual arrangements to compute
the characteristic numbers for generic arrangements of three or four
lines in $\P^2$ (a good reference for the intersection theory that we
use is Eisenbud and Harris \cite{EH} or Fulton \cite{Fulton}). This seems to be the first computation
of these characteristic numbers for line arrangements though
characteristic numbers were computed for smooth curves of degrees 3
and 4 by Zeuthen \cite{Zeuthen} in the 19$^\text{th}$ century. As reported in Kleiman \cite{Kleiman15}, the
19$^\text{th}$ century methods lacked adequate foundations prompting
Hilbert to ask for a rigorous computation of these characteristic
numbers. Aluffi \cite{Aluffi} and Kleiman and Speiser
\cite{KSp} verified Zeuthen's degree 3 predictions using intersection
theory. Vakil \cite{Vakil} used intersection theory on the moduli
space of stable maps to verify the degree 4 predictions. The
importance of the characteristic numbers is suggested by a theorem
originally due to Zeuthen \cite{Zeuthenbook} (also see Fulton
\cite[section 10.4]{Fulton}) that shows that the characteristic
numbers for a family of curves determine the number of such curves
tangent to smooth curves of {\em arbitrary} degrees. We close Section
\ref{section:generic} by interpreting this theorem for line arrangements. It would be interesting to use intersection theory on the moduli space of stable maps to recover our results.

In Section \ref{section:dconed} we consider cones over generic arrangements of
hyperplanes.

\begin{definition} \label{dconed} A $d$-coned generic arrangement $\A$
  in $\P^n$ is an arrangement of $k > n$ hyperplanes obtained from a
  generic hyperplane arrangement $\mathcal{B}$ in a linear subspace $H
\cong \P^{n-(d+1)}$ by taking the cone with a $d$-dimensional linear
  space (equivalently, with $d+1$ general points). That is, there
  exists a linear space $\Omega$ of dimension $d$, disjoint from $H$
  so that each hyperplane in $\A$ is the linear span of both a
  hyperplane in $\mathcal{B}$ and $\Omega$. \end{definition}

In Section \ref{section:dconed} we answer the enumerative problems from Question
\ref{mainq} for $d$-coned generic arrangements. Here the methods of
Schubert Calculus come into play and the Catalan numbers make a cameo
appearance.

Our approach to computing $\N_A$ is to first compute the number of
{\em labeled} arrangements with intersection lattice isomorphic to $\L(\A)$
that pass through $D$ points in general position in $\P^n$. Dividing by
the number of ways to label the hyperplanes in $\A$ gives $\N_A$. This
allows us to work in a product of polynomial rings rather than its
quotient. A recent paper by Feh{\'e}r, N{\'e}methi, and Rim{\'a}nyi
\cite{Rimanyi} also studies enumerative problems involving hyperplane
arrangements; they embrace quotient varieties and work with equivariant
cohomology.

Though we were not able to answer Question \ref{mainq} in terms of the
combinatorics of $\L(\A)$, we remain optimistic about this possibility
for special families of arrangements despite several warning signs that the question may be very
difficult in general. Mn\"{e}v's Universality Theorem says that each variety appears as the
closure of $\M_\A$ for some hyperplane arrangement $\A$, so the
geometry of $\M_\A$ can be arbitrarily complicated. Another warning
sign appears if we take a naive approach to the dimension problem in
$\P^2$. For each line arrangement $\A$ with $\ell$ labeled lines
$L_1, \ldots, L_\ell$ and $k$ labeled points $p_1, \ldots, p_k$ of
intersection among these lines, form the parameter space
$$ P_\A = \{ (H_1, \ldots, H_\ell, P_1, \ldots, P_k) \in (\P^{2*})^\ell \times
(\P^2)^k \; : \; P_i \in H_j \text{ if $p_i \in L_j$ }\}. $$ Note that
$\dim P_{\A} = \dim \M_\A$. If the conditions $P_i \in H_j$ are
algebraically independent, then this dimension is $2(k+\ell) -
\sum_{i=1}^k |\{ L_j : p_i \in L_j\}|$ (sometimes this is called the
virtual dimension of $\M_\A$). Such a formula holds for
generic line arrangements and pencils as well as for many other
arrangements; however, it fails for the Pappus arrangement pictured in
Figure \ref{figpappus}, since one
of the conditions $P_i \in H_j$ is implied by the others. Indeed, the
dimension of $\M_\A$ depends on the syzygies among these incidence
conditions and so any potential algorithms need to be sensitive enough
to recognize such syzygies from the combinatorial information in
$\L(A)$, a task that appears to be quite difficult. This example (more
precisely, its projective dual) was studied in detail by Feh{\'e}r,
N{\'e}methi, and Rim{\'a}nyi \cite{Rimanyi} and by Ren, Richter-Gebert and Sturmfels \cite{Ren}.

\begin{figure}[h!t]
\begin{center}
\scalebox{0.5}{\includegraphics{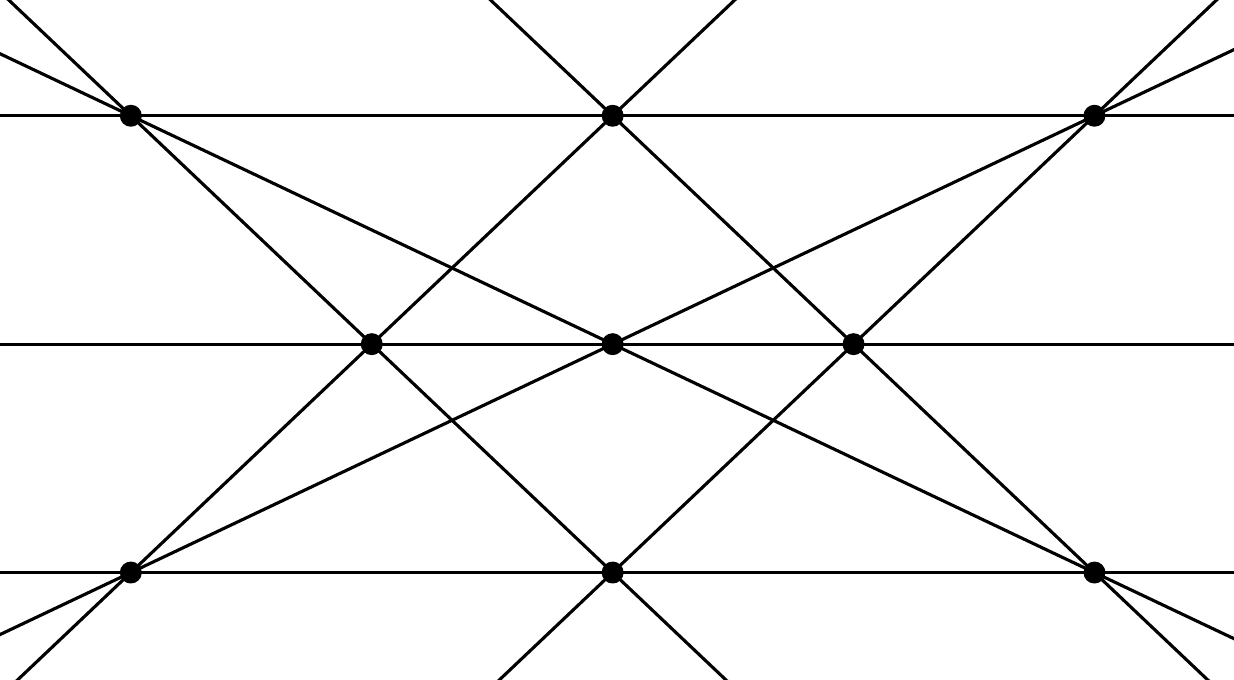}}
\end{center}
\caption{The Pappus arrangement consists of nine lines. The 27
  point-line incidence conditions are dependent: the incidence
  correspondence has codimension 26.}
\label{figpappus}
\end{figure}

\begin{remark} \label{notation} We use the notation $\{(a)^{a'},(b)^{b'}, \ldots,
(k)^{k'}\}$ (sometimes omitting the braces) to refer to the multiset
  consisting of $a'$ copies of $a$, $b'$ copies of $b$, et
  cetera. Similarly, we use $\binom{d}{(a)^{a'},(b)^{b'}, \ldots,
(k)^{k'}}$ to refer to the multinomial that counts the ways of choosing $a'$ groups of
$a$ objects, $b'$ groups of $b$ objects, $\ldots$, and $k'$ groups of
$k$ objects from $d$ labeled objects. That is, $\binom{d}{(a)^{a'},(b)^{b'}, \ldots,
(k)^{k'}} = d!/(a!)^{a'} \cdots (k!)^{k'}$. \end{remark}

\section{Generic Arrangements} \label{section:generic}

An arrangement of $k$ hyperplanes in $\P^n$ is said to be {\em generic} if
$k > n$ and no point is in the intersection of more than $n$ of the
hyperplanes. Carlini discovered the following fact while studying the
Chow variety of zero dimensional degree-$k$ cycles in $\P^n$
\cite[Proposition 3.4]{Carlini}.

\begin{theorem}\label{generics} When $\A$ is a generic arrangement of $k$ hyperplanes
  in $\P^n$ then the dimension $D$ of $\MLA$ is $kn$ and the number of
  arrangements with lattice type isomorphic to $\L(A)$ that pass through
  $D$ points in general position in $\P^n$ is $$ \N_\A = \frac{1}{k!}
\binom{kn}{n} \binom{(k-1)n}{n} \cdots \binom{n}{n} =
\frac{(kn)!}{k!(n!)^k}.$$
\end{theorem}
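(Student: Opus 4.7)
The plan is to compute the dimension directly from the embedding, and then compute $N_\A$ by first counting labeled arrangements through the $D$ points and dividing by $k!$, as indicated in the introduction.

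For the dimension, I would observe that an ordered arrangement of $k$ labeled hyperplanes in $\P^n$ corresponds to a point of $(\P^{n*})^k$, which has dimension $kn$. Since the genericity condition (no point lies on more than $n$ of the hyperplanes) is a Zariski-open condition, the space of labeled generic arrangements is a dense open subset of $(\P^{n*})^k$. Passing to the $\S_k$-quotient does not change the dimension, so $D = kn$.

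To count labeled arrangements through $D=kn$ general points, the key observation is that each of the $kn$ points must lie on at least one hyperplane $H_j$, which gives an assignment $\sigma\colon \{1,\dots,kn\} \to \{1,\dots,k\}$ recording which $H_j$ contains $p_i$. I would then argue that for a generic choice of the $kn$ points, only assignments that are exact partitions into $k$ blocks of size $n$ contribute: a block of size less than $n$ leaves a positive-dimensional family for the corresponding hyperplane (so contributes nothing to a zero-dimensional count), while a block of size greater than $n$ forces $n+1$ general points to be coplanar in a hyperplane, which fails by general position. For each such partition there is exactly one labeled arrangement, since $n$ points in general position determine a unique hyperplane. Hence the number of labeled arrangements is the number of ways to split the labeled set of $kn$ points into an ordered sequence of $k$ groups of $n$, namely $\binom{kn}{n}\binom{(k-1)n}{n}\cdots \binom{n}{n} = (kn)!/(n!)^k$.

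To conclude, I would check that the resulting arrangements actually lie in $\MLA$ and that the $\S_k$-action on the set of labeled arrangements is free, so that dividing by $k!$ yields $N_\A$. Genericity of the constructed arrangement follows because any $n+1$ of the $kn$ points are in general position, hence no point of $\P^n$ lies on $n+1$ of the $H_j$'s; the hyperplanes are pairwise distinct because two coinciding hyperplanes would contain $2n > n$ of the general points. Freeness of the $\S_k$-action is then immediate since the $k$ hyperplanes are distinct. The division gives $N_\A = \frac{(kn)!}{k!\,(n!)^k}$.

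The main obstacle, and the step that needs the most care, is justifying the claim that the only assignments contributing to the finite count are exact partitions into blocks of size $n$, together with the dimension-count showing no other configurations contribute. This relies on a careful application of general position to rule out both undercounted and overcounted blocks, and is essentially an incidence-theoretic computation on $(\P^{n*})^k \times (\P^n)^{kn}$.
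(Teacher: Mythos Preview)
Your proposal is correct and follows essentially the same approach as the paper: both compute $D=kn$ from the open embedding in $(\P^{n*})^k$, argue via general position that each hyperplane must contain exactly $n$ of the $kn$ points, count such ordered partitions as $(kn)!/(n!)^k$, and divide by $k!$. The paper's version is slightly terser---it invokes the pigeonhole principle directly rather than your separate dimension/impossibility argument for under- and over-sized blocks---while you add explicit checks that the resulting arrangement is generic and that the $\S_k$-action is free; but the core argument is identical.
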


\begin{proof}
  The moduli space $\MLA$ is a $k!$ to 1 cover of the complement of a
  closed set in $(\P^{n*})^k$ that parameterizes the sets of $k$
  hyperplanes that contain a set of $n$ linearly dependent
  hyperplanes. So $D=\dim \MLA = \dim (\P^{n*})^k = kn$. We count the
  ordered (or labeled) generic arrangements passing through $kn$
  points in general position. Since the points are in general
  position, no more than $n$ can lie on any one hyperplane. So by the
  pigeon-hole principle, each hyperplane contains precisely $n$ of the
  points and each hyperplane is completely determined by these $n$
  points. There are clearly $\prod_{i=0}^{k-1} \binom{kn-in}{n}$
  ways to distribute the points among the labeled hyperplanes, but each
  hyperplane arrangement can be labeled in $k!$ ways so dividing the
  product of binomial coefficients by $k!$ gives $\N_\A$.
\end{proof}

\begin{remark}
We remark that when $n=2$ the formula for $\N_\A$ reduces to
$(2k-1)!!$. Aside from the nice simplicity of this result, this
formulation allows us to interpret the result in terms of the
multivariate Tutte polynomial of the lattice $\L(\A)$. The
multivariate Tutte polynomial of $\L(\A)$ (see Ardila \cite{Ardila} or Sokal
\cite{Sokal}) is defined as
$$ Z_{\L(\A)}(q,x_1,\ldots,x_k) = \sum_{B \in \L(\A)} (q^{-rk(B)})
\prod_{H_j \in B} x_j, $$ where if $B \subset
\A$ then $rk(B) = \codim \cap_{H\in B} H.$ When $\A$ is
a generic arrangement with $k$ hyperplanes, $$Z_{\L(\A)}(1,x_1, \ldots,
x_k) = \sum_{B \subset \A} \prod_{H_j \in B} x_j =
(1+x_1)(1+x_2)\cdots (1+x_k).$$ In
particular, $$ Z_{\L(\A)}(1,0,2,\ldots,2k-2) = (2k-1)!! = \N_\A. $$
\end{remark}

\begin{remark} The intersection ring (or Chow ring) $A(X)$ of a variety $X$ can be defined as the ring of equivalence classes of algebraic subvarieties of $X$ modulo rational equivalence, graded by codimension \cite{Fulton}. Here $[Y_1 \cup Y_2] = [Y_1] + [Y_2]$. Also, $[Y_1 \cap Y_2] = [Y_1]\cdot [Y_2]$ if $Y_1$ and $Y_2$ intersect transversely. The intersection rings that we consider can also be interpreted in terms of cohomology; for more on this point of view, see Katz \cite{Katz}.

When $X\cong \P^{n_1}\times \cdots \times \P^{n_k}$ is a product of projective spaces then $$A(X) \cong \frac{\mathbb{Z}[x_1,\ldots,x_k]}{(x_1^{n_1+1},\ldots,x_k^{n_k+1})},$$ where the $x_i$ are the pullbacks of the classes of hyperplanes on each factor to the product. \end{remark}
 
Next we prove a Lemma that will be used in many of the following arguments.

\begin{lemma}\label{transpts}

Fix an arrangement $\A$ in $\P^n$ with $D=\dim \M_\A$ and $|\A|=k$. The conditions that the arrangement contain $D$ specified points in general position are transverse and the class corresponding to the intersection of these conditions is $$\left(\sum_{i=1}^kx_i\right)^D.$$

\end{lemma}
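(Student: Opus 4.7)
The plan is to work in the ambient product $(\P^{n*})^k$, whose Chow ring (as noted in the preceding remark) is $\Z[x_1,\ldots,x_k]/(x_1^{n+1},\ldots,x_k^{n+1})$ with $x_i$ the pullback of the hyperplane class from the $i$-th factor. I would first identify the divisor class cut out by a single point-incidence condition, and then argue transversality for $D$ such conditions in general position, so that their combined class is simply the product.

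\textbf{Class of a single condition.} For a fixed point $P \in \P^n$, a labeled arrangement $(H_1,\ldots,H_k)$ contains $P$ exactly when $P \in H_i$ for at least one $i$. So the incidence locus is the reducible divisor $D_P = \bigcup_{i=1}^k Z_{P,i}$, where $Z_{P,i}$ is the pullback to $(\P^{n*})^k$ of the hyperplane $\{H \in \P^{n*} : P \in H\}$ from the $i$-th factor. Each $Z_{P,i}$ has class $x_i$, and these are distinct irreducible components of $D_P$, so $[D_P] = \sum_{i=1}^k x_i$.

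\textbf{Transversality.} For $D$ points $P_1,\ldots,P_D$ in general position I would expand
\[
\bigcap_{j=1}^D D_{P_j} \;=\; \bigcup_{(i_1,\ldots,i_D) \in \{1,\ldots,k\}^D} \;\bigcap_{j=1}^D Z_{P_j, i_j},
\]
and check each summand. Setting $m_i = |\{j : i_j = i\}|$, the summand imposes on $H_i$ the condition of passing through the $m_i$ points $\{P_j : i_j = i\}$; since these conditions involve the coefficients of $H_i$ only, conditions on different factors are automatically independent. When $m_i \le n$, general position of the $P_j$ makes these $m_i$ linearly independent conditions on $H_i$, cutting a linear subspace of the $i$-th factor of codimension $m_i$. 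Hence the summand (when nonempty) is a transverse intersection of codimension $\sum_i m_i = D$ with class $x_{i_1}\cdots x_{i_D}$. When some $m_i > n$ the summand is empty, matching $x_i^{n+1}=0$ in the Chow ring.

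\textbf{Conclusion and main obstacle.} Summing over all $k^D$ multi-indices gives $\left[\bigcap_{j=1}^D D_{P_j}\right] = \left(\sum_{i=1}^k x_i\right)^D$, as claimed. The only real subtlety is the transversality step, which reduces to the standard fact that any $m \le n$ general points in $\P^n$ impose $m$ independent conditions on the space of hyperplanes through them; the vanishing of terms with $m_i > n$ is absorbed automatically into the defining relations of the Chow ring.
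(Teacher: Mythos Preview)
Your argument is correct, but it takes a different route from the paper's. The paper does not decompose the intersection by multi-indices; instead, after observing that a single point condition has class $\sum_i x_i$, it invokes Kleiman's Transversality Theorem: the group $\mathrm{PGL}_{n+1}^k$ acts transitively on the ambient product (and on $\M_\A$), the $D$ point-incidence divisors are translates of one another under this action, and hence for general points they meet transversely with intersection class equal to the product $(\sum_i x_i)^D$.

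Your approach is more elementary and hands-on: by splitting each $D_{P_j}$ into its $k$ linear components you reduce everything to the standard fact that $m\le n$ general points impose independent conditions on hyperplanes, so no group action or Kleiman is needed. This has the pleasant side effect of making the multinomial structure of the generic-arrangement count completely transparent. The paper's approach, on the other hand, is shorter and more conceptual, and it sets up the machinery (Kleiman plus a transitive group action) that the paper reuses in later arguments where an explicit linear decomposition is not available.
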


\begin{proof} The class of the condition that a point lies on the $i$th hyperplane is just $x_i$. Then the class of the condition that a point lies on one of the $k$ hyperplanes is $\sum_{i=1}^kx_i$. Now, the action of $\mathrm{PGL}(n)^k$ is transitive on $\M_\A$. Since the $D$ point conditions are assumed to be generic we can use this $\mathrm{PGL}(n)^k$ action to move one point condition to another. Hence Kleiman's Transversality Theorem \cite{Kleiman} says that these conditions are transverse and the corresponding Chow class is the product of the classes. \end{proof}

Note that Theorem \ref{generics} could also be easily proved by Lemma \ref{transpts}. Since we are not putting any conditions on the generic lines we have nothing but point conditions. Then the degree of the Chow class given in Lemma \ref{generics} counts the number of ordered generic hyperplane arrangements passing through $D$ points in general position. Dividing by $k!$ gives the number of such unordered generic arrangements.

Continuing to focus on the case where $n=2$, we will compute the
characteristic numbers of generic line arrangements with $k=3$ or
$k=4$ lines. We define the characteristic number $N_\A(p,\ell)$ to be the number of arrangements with the same lattice type as $\A$ that
pass through $p$ points and are tangent to $\ell$ lines in general
position (with $p + \ell = \dim \M_\A$). When $\A$ is a generic arrangement of $k$ lines, we simplify the notation to $N_k(p,\ell)$. To get these numbers we compute the degree of a related variety $M$ which is a delicate intersection calculation. We do this in two different ways. First for the case $k=3$ we preform an explicit calculation using the method of undetermined coefficients. Then for the case $k=4$ we argue that the corresponding intersection class is a product using transversality considerations.

\begin{remark} \label{tangencyremark} The only way that an arrangement of 3 lines $\A=\{\ell_1,\ell_2,\ell_3\}$ can be tangent to a given line $L$ is if the cubic defining the arrangement meets the line $L$ at a point of multiplicity strictly greater than 1. If the arrangement is generic this means that an intersection point $\ell_i\cap \ell_j$ of two of the 3 lines must lie on $L$. This motivates the introduction of the points $p_{ij}=\ell_i\cap \ell_j$ in the following lemma: a generic arrangement is tangent to a line $L$ if and only if there exists a point $p_{ij}$ on $L$. \end{remark}

\begin{lemma}\label{3linesM}
The set
$$ M = \left\{(\ell_1,\ell_2,\ell_3,p_{12},p_{13},p_{23}) \in
\left( \P^{2*}\right)^3 \times \left( \P^2 \right)^3\; : \; p_{ij} =
\ell_i \cap \ell_j \text{ for $1 \leq i < j \leq 3$} \right\}$$
is a quasi-projective variety of dimension 6 in $(\P^{2*})^3\times (\P^{2})^3$. The Chow ring is $$A=A((\P^{2*})^3\times (\P^{2})^3)\cong \Z[x_1,x_2,x_3,y_{12},y_{13},y_{23}]/(x_1^3,x_2^3,x_3^3,y_{12}^3,y_{13}^3,y_{23}^3)$$ where the $x_i$ are the classes of the lines and the $y_{ij}$ are the classes of the points. The class of $M$ in $A$ is $$[M]=
\prod_{1 \leq i < j \leq 3} (x_i + y_{ij})(x_j + y_{ij}) .$$

\end{lemma}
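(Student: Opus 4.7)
My plan is to exhibit $M$ as the intersection of six divisors in the $12$-dimensional ambient product, identify the class of each divisor, and then verify that the intersection is proper so that $[M]$ equals the product of the six classes.

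For the dimension claim, I would use the projection $\pi \colon M \to (\P^{2*})^3$ onto the line factors. On the open locus where the three lines are pairwise distinct, $\pi$ admits a regular inverse sending $(\ell_1,\ell_2,\ell_3)$ to $(\ell_1,\ell_2,\ell_3,\ell_1\cap\ell_2,\ell_1\cap\ell_3,\ell_2\cap\ell_3)$, so $\pi$ is birational and $\dim M = 6$.

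Each of the six incidence conditions, for example $\{p_{ij}\in\ell_i\}$, is cut out by a single bilinear equation in the homogeneous coordinates of the $\P^{2*}$ factor for $\ell_i$ and the $\P^2$ factor for $p_{ij}$, hence is a divisor. Its class is determined by restriction to fibers: fixing $\ell_i$ picks out the line $\ell_i\subset \P^2_{p_{ij}}$, a hyperplane of class $y_{ij}$; fixing $p_{ij}$ picks out the pencil of lines through $p_{ij}$ in $\P^{2*}_{\ell_i}$, a hyperplane of class $x_i$. Hence $[\{p_{ij}\in\ell_i\}] = x_i + y_{ij}$ and symmetrically $[\{p_{ij}\in\ell_j\}] = x_j + y_{ij}$.

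To finish, I would check that the six divisors meet properly along $M$. The key local computation is the Jacobian of the six defining equations at a general point of $M$: restricting to variations in the six $p$-coordinates alone (keeping the lines fixed), the Jacobian block-decomposes by pair into three $2\times 2$ blocks, the rows of the $(i,j)$-block being the normal directions of $\ell_i$ and $\ell_j$ in $\P^2_{p_{ij}}$. Each block has full rank exactly when $\ell_i\neq\ell_j$, so at a general point of $M$ all six differentials are independent. Any extra components of the intersection, supported on the locus where two lines coincide, have dimension at most $5$ and so contribute nothing in codimension $6$. Therefore $[M] = \prod_{1\leq i<j\leq 3}(x_i+y_{ij})(x_j+y_{ij})$. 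The main obstacle is precisely this transversality verification; the block-diagonal Jacobian calculation is what keeps it clean.
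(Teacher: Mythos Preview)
Your approach is correct, but it differs substantially from what the paper does for this lemma. The paper proves the $k=3$ case by the method of undetermined coefficients: it writes $[M]$ as an unknown degree-$6$ polynomial in the Chow ring, partitions the $141$ complementary monomials into types, and for each type either argues geometrically that the corresponding coefficient vanishes or exhibits a unique (or, in one case, two) configuration(s) realizing the intersection. Your argument instead recognizes $\overline{M}$ as the scheme-theoretic intersection of the six incidence divisors $\{p_{ij}\in\ell_i\}$ and $\{p_{ij}\in\ell_j\}$, checks that this intersection has the expected dimension, and concludes via the complete-intersection formula.

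Interestingly, the paper itself adopts exactly your transversality strategy for the $k=4$ analogue (their Lemma on four lines): there they compute the class of each incidence divisor, stratify $(\P^{2*})^4$ by coincidence type, check fiber dimensions to see that the intersection has the expected dimension, and invoke the local complete intersection result (Theorem~5.10 of Eisenbud--Harris). So your proof is essentially the paper's second method, applied one case earlier than the authors chose to. The paper is explicit that the undetermined-coefficients computation for $k=3$ is meant to illustrate a contrasting technique.

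One small wording issue: the degenerate loci you describe (where two or more $\ell_i$ coincide) are not ``extra components'' of the intersection---they lie in $\overline{M}$, since one can rotate $\ell_j$ about the chosen $p_{ij}$ to deform into the generic locus. What matters, and what your dimension count actually establishes, is that the set-theoretic intersection of the six divisors has dimension exactly $6$, so the intersection is proper; combined with your Jacobian observation that it is generically reduced along $\overline{M}$, this gives $[\overline{M}]=\prod(x_i+y_{ij})(x_j+y_{ij})$. The trade-off: your route is far shorter and generalizes cleanly, while the paper's coefficient-by-coefficient verification gives a concrete geometric interpretation of every monomial in the expansion.
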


\begin{proof} Since $M$ has codimension 6 the class $[M]$ is a degree 6 polynomial in the Chow ring $A$. We write this class as the sum of all degree 6 monomials in $A$: $$[M]=\sum\limits_{a_1+\cdots +a_6=6}c_{(a_i)}x_1^{a_1}x_2^{a_2}x_3^{a_3}y_{12}^{a_4}y_{13}^{a_5}y_{23}^{a_6} .$$ Now we determine all the coefficients. First we show which coefficients are zero. To do this we first partition the set of degree-6 monomials that appear in the expanded product into types. In the definitions that follow the notation $p \in A_i/I$ denotes a degree-$i$ monomial in the variables appearing in the quotient ring $A/I$.

The first type of monomials we define are $$T_1=\left\{ \{ x_1^2y_{12}^2p| p\in A_2/(x_1,y_{12})\} \bigcup \{ x_2^2y_{12}^2p| p\in A_2/(x_2,y_{12})\} \right. $$ $$ \bigcup \{ x_1^2y_{13}^2p| p\in A_2/(x_1,y_{13})\} \bigcup \{ x_3^2y_{13}^2p| p\in A_2/(x_3,y_{13})\} $$ $$\left. \bigcup \{ x_2^2y_{23}^2p| p\in A_2/(x_2,y_{23})\}
\bigcup \{ x_3^2y_{23}^2p| p\in A_2/(x_3,y_{23})\}
\right\} .$$ The second type we define are $$T_2=\left\{ \{x_1^2x_2^2y_{12}p|p\in A_1/(x_1,x_2,y_{12})\} \bigcup \{x_1^2x_3^2y_{13}p|p\in A_1/(x_1,x_3,y_{13})\} \right. $$ $$ \left. \bigcup \{x_2^2x_3^2y_{23}p|p\in A_1/(x_2,x_3,y_{23})\} \right\} .$$ The third type we define are $$T_3=\left\{ \{x_1y_{12}^2y_{13}^2p|p\in A_1/(x_1,y_{12},y_{13})\} \bigcup \{x_2y_{12}^2y_{23}^2p|p\in A_1/(x_2,y_{12},y_{23})\} \right. $$ $$ \left. \bigcup \{x_3y_{23}^2y_{13}^2p|p\in A_1/(x_3,y_{23},y_{13})\} \right\} .$$ The fourth type we define is $$T_4=\{ x_1^2y_{23}^2x_2y_{12},x_1^2y_{23}^2x_3y_{13},x_2^2y_{13}^2x_1y_{12},x_2^2y_{13}^2x_3y_{23},x_3^2y_{12}^2x_1y_{13},x_3^2y_{12}^2x_2y_{23}\} .$$ Note that $|T_1|=54$, $|T_2|=9$, $|T_3|=9$, and $|T_4|=6$.

For any monomial $m\in A_6$ we define the complement of $m$ by the unique monomial $c(m)=m'$ such that $mm'\neq 0 \in A_{12}$. Set $Z_1=T_1\cup T_2\cup T_3\cup T_4$. Notice that $Z_1$ has the property that $c(Z_1)=Z_1$. Hence to show that the monomials in $Z_1$ have the coefficient of zero in $[M]$ we just check that multiplying $[M]$ by each of the monomials in $Z_1$ gives zero. Also, notice that within each type there is a union of smaller sets where each of those sets can be obtained from one another by a permutation of the coordinates. Hence if we can show that the coefficient is zero for one of the subsets then we have completed the task for the entire type. In the following arguments we use duality on $\P^2$ often.

First we argue $[M]m=0$ where $m\in T_1$. Let $m\in T_1$ be in the first subset so $m=x_1^2y_{12}^2m'$. Then in $A$ this monomial $m$ represents fixing a generic line for $\ell_1$ in the first factor of $\P^{2*}$ and a generic point for $p_{12}$. Set $\pi_i : \left( \P^{2*}\right)^3 \times \left( \P^2 \right)^3\to \P^{2*}$ to be the projection to the factor corresponding to the line $\ell_i$ and $\pi_{ij} : \left( \P^{2*}\right)^3 \times \left( \P^2 \right)^3\to \P^{2}$ to be the projection to the factor corresponding to the point $p_{ij}$. With this notion we can say $m=[\pi_{1}^{-1}(\text{generic point}) \cap \pi_{12}^{-1}(\text{generic point})\cap (m' \text{ conditions})]$ where we will not need the conditions from $m'$. Using the Moving Lemma (see \cite[Theorem 5.4]{EH}) $$[M]m=[M\cap \pi_{1}^{-1}(\text{generic point}) \cap \pi_{12}^{-1}(\text{generic point})\cap (m' \text{ conditions})] .$$ In $M$ the point $p_{12}$ must lie on $\ell_1$ and this cannot happen if $\ell_1$ and $p_{12}$ are generic. Hence $[M]m=0.$

The other types have similar arguments. We include them for completeness, but with briefer arguments. For type 2 we look again at the first subset and put $m=x_1^2x_2^2y_{12}m'$. Then $[M]m$ equals $$[M \cap \pi_{1}^{-1}(\text{generic point})\cap \pi_{2}^{-1}(\text{generic point}) \cap \pi_{12}^{-1}(\text{generic line})\cap (m' \text{ conditions})].$$ This means that we have fixed generic lines for lines 1 and 2. Hence the point $p_{12}$ is fixed using $M$, but $p_{12}$ must also lie on a fixed line. We cannot find such a configuration; hence $[M]m=0$.

For type 3 we again look at the first subset $m=x_1y_{12}^2y_{13}^2m'$. Then $[M]m$ equals $$[M \cap \pi_{1}^{-1}(\text{generic line})\cap \pi_{12}^{-1}(\text{generic point})\cap \pi_{13}^{-1}(\text{generic point})\cap (m' \text{ conditions})].$$ Line $\ell_1$ must satisfy the condition coming from $x_1$, which can be interpreted as saying that line $\ell_1$ lies in a fixed pencil of lines; however, this is inconsistent with the conditions requiring points $p_{12}$ and $p_{13}$ to lie in fixed (and general) positions. Hence $[M]m=0$.

For type 4 we just look at the first monomial $m= x_1^2y_{23}^2x_2y_{12}$. Then $[M]m$ equals $$[M \cap \pi_{1}^{-1}(\text{generic point})\cap \pi_{23}^{-1}(\text{generic point})\cap \pi_{2}^{-1}(\text{generic line})\cap \pi_{12}^{-1}(\text{generic line})].$$ In this case $\ell_1$ must be a fixed generic line and $p_{23}$ must be a fixed generic point. Since line $\ell_2$ must lie in a fixed pencil and since $p_{23} \in \ell_2$, the line $\ell_2$ is fixed too. Then $p_{12} = \ell_1 \cap \ell_2$ is fixed and so it cannot satisfy the generic linear condition in the class $[M]m$. Hence $[M]m=0$.

In this case we have fixed a generic line for line 1 and force $p_{23}$ to be fixed point in general position. The condition corresponding to $[\pi^{-1}_2(\text{generic line})]$ forces line 2 to lie in a fixed pencil. Then line 2 is determined (since it passes through $p_{23}$) and so is $p_{12} = \ell_1 \cap \ell_2$. However, the condition corresponding to $[\pi^{-1}_{12}(\text{generic line})]$ forces $p_{12}$ to lie on a fixed general line (that is, not through $\ell_1 \cap \ell_2$). Thus, we cannot find such a configuration and so $[M]m=0$.

Now we define another set of monomials and prove that each of these monomials have coefficient equal to 1 in $[M]$. Again we define this set as a union of a few different types. Unfortunately, there are many more types in this set even though there are less total monomials. We define the types in the following table.

\begin{longtable}{|c|l|}
\hline
Type & Monomials\\
\hline
\hline
$S_1$ & $\{x_1^2x_2^2x_3^2\}$\\
\hline
$S_2$ & $\{x_1^2x_2^2x_3y_{13},x_1^2x_2^2x_3y_{23},x_1^2x_3^2x_2y_{23},x_1^2x_3^2x_2y_{12}, x_2^2x_3^2x_1y_{13},x_2^2x_3^2x_1y_{12}\}$\\
\hline
$S_3$ & $\{x_1^2x_2^2y_{13}y_{23},x_1^2x_3^2y_{12}y_{23},x^2_2x_3^2y_{12}y_{13}\}$\\
\hline
$S_4$ & $\{ x_1^2x_2x_3y_{23}^2,x_2^2x_1x_3y_{13}^2,x_3^2x_2x_1y_{12}^2\}$\\
\hline
$S_5$ & $\{ x_1^2x_2x_3y_{12}y_{13},x_2^2x_1x_3y_{12}y_{23},x_3^2x_1x_2y_{13}y_{23}\}$\\
\hline
$S_6$ & $\{ x_1^2x_2x_3y_{23}y_{12},x_1^2x_2x_3y_{23}y_{13},x_2^2x_1x_3y_{13}y_{12},x_2^2x_1x_3y_{13}y_{23},$ \\
& $x_3^2x_2x_1y_{12}y_{13},x_3^2x_2x_1y_{12}y_{23}\}$\\
\hline
$S_7$ & $\{ x_1^2x_2y_{13}y_{23}^2,x_1^2x_3y_{12}y_{23}^2,x_2^2x_1y_{23}y_{13}^2,x_2^2x_1y_{23}y_{13}^2,x_3^2x_1y_{23}y_{12}^2,x_3^2x_2y_{13}y_{12}^2\}$\\
\hline
$S_8$ & $\{ x_1^2x_2y_{12}y_{13}y_{23},x_1^2x_3y_{12}y_{13}y_{23}, x_2^2x_1y_{12}y_{13}y_{23},x_2^2x_3y_{12}y_{13}y_{23},$\\
& $x_3^2x_2y_{12}y_{13}y_{23},x_3^2x_1y_{12}y_{13}y_{23}\}$\\
\hline
$S_9$ & $\{x_1^2y_{23}^2y_{12}y_{13},x_2^2y_{13}^2y_{23}y_{12},x_3^2y_{12}^2y_{23}y_{13}\}$ \\
\hline
$S_{10}$ & $\{ x_1x_2x_3y_{12}^2y_{13},x_1x_2x_3y_{12}^2y_{23},x_1x_2x_3y_{13}^2y_{12},x_1x_2x_3y_{13}^2y_{23},$\\
& $x_1x_2x_3y_{23}^2y_{12},x_1x_2x_3y_{23}^2y_{13}\}$\\
\hline
$S_{11}$ & $\{ x_1x_2y_{12}^2y_{13}y_{23},x_1x_3y_{12}y_{13}^2y_{23},x_3x_2y_{12}y_{13}y_{23}^2\}$\\
\hline
$S_{12}$& $\{x_1x_2y_{12}y_{13}^2y_{23},x_1x_2y_{12}y_{13}y_{23}^2,x_1x_3y_{12}^2y_{13}y_{23},x_1x_3y_{12}y_{13}y_{23}^2,$\\
& $x_3x_2y_{12}y_{13}^2y_{23},x_3x_2y_{12}^2y_{13}y_{23} \}$\\
\hline
$S_{13}$ & $\{ x_1x_2y_{13}^2y_{23}^2,x_1x_3y_{12}^2y_{23}^2,x_2x_3y_{12}^2y_{13}^2\}$\\
\hline
$S_{14}$ & $ \{ x_1y_{12}^2y_{13}y_{23}^2,x_1y_{12}y_{13}^2y_{23}^2, x_2y_{12}^2y_{13}^2y_{23},x_2y_{12}y_{13}^2y_{23}^2, $\\
& $x_3y_{12}^2y_{13}y_{23}^2,x_3y_{12}^2y_{13}^2y_{23}\}$\\
\hline
$S_{15}$ & $\{y_{12}^2y_{13}^2y_{23}^2\}$\\
\hline
\end{longtable}

Now put $Z_2=S_1\cup \cdots \cup S_{15}$. In the next table we compute the intersection of $M\cap m$ where $m\in Z_2$. We briefly describe, for each type, how this intersection results in a unique arrangement of three lines with the marked points $p_{ij}$. Again we use the argument that within each $S_i$ we only need to check one monomial since all the others within that class can be obtained from permuting coordinates.

\begin{longtable}{|c|l|l|}
\hline
Type &Monomial& Intersect this class with $M$ gives a unique point \\
\hline
\hline
$S_1$&$x_1^2x_2^2x_3^2$ &Fixing three lines fixes the three intersection points. \\
\hline
$S_2$ & $x_1^2x_2^2x_3y_{13}$ & Fixing lines 1 and 2. Then fixing a linear condition\\
&& point 13 gives a unique point and then putting \\
&&a linear condition on line 3 fixes it.\\
\hline
$S_3$ & $x_1^2x_2^2y_{13}y_{23}$ & Fixing lines 1 and 2. Then giving linear conditions on points\\
&& 13 and 23 fixes those points which fixes line 3.\\
\hline
$S_4$ & $x_1^2x_2x_3y_{23}^2$ & Fix line 1 and point 23. Then putting linear conditions on lines \\
&& 2 and 3 fixes them.\\
\hline
$S_5$ & $x_1^2x_2x_3y_{12}y_{13}$ & Fix line 1. Then Putting a linear condition on points \\
&&12 and 13 fixes them on line 1. \\
&&Then putting linear conditions on lines 2 and 3 fixes them.\\
\hline
$S_6$ &$x_1^2x_2x_3y_{12}y_{23}$ & Fixing line 1 and giving a linear condition on point 12\\
&& fixes point 12. Together with putting a linear condition on\\
&& line 2 fixes it. Then putting a linear condition on point 23\\
&& fixes it and line 3.\\
\hline
$S_7$ & $x_1^2x_2y_{13}y_{23}^2$ & Fixing line 1, point 23, and putting a linear condition on \\
&&line 2 fixes it. Then putting a linear condition on point 13\\
&& fixes it on line one and hence fixes line 3.\\
\hline
$S_8$ & $x_1^2x_2y_{12}y_{13}y_{23}$ & Fix line 1 and putting liner conditions on points \\
&&12 and 13 fixes these points. Then putting a linear\\
&& condition on line 2 fixes it. And then finally fixing a linear \\
&& condition on point 23 fixes it and line 3.\\
\hline
$S_9$ &$x_1y_{23}^2y_{12}y_{13}$ & Fix line 1 and point 23. Putting linear conditions \\
&&on points 12 and 13 fixes them on line 1. Then this \\
&& fixes lines 2 and 3.\\
\hline
$S_{10}$ &$x_1x_2x_3y_{12}^2y_{13}$ & Fixing point 12 with putting linear conditions \\
&& on lines 1 and 2 fixes them. Then putting a linear condition on \\
&& point 13 fixes it on line 1. Then putting a linear condition\\
&& on line 3 fixes it.\\
\hline
$S_{11}$&$x_1x_2y_{12}^2y_{13}y_{23}$ & Fixing point 12 with putting linear conditions \\
&& on lines 1 and 2 fixes them. Then putting linear conditions on\\
&& points 13 and 23 fixes them and hence line 3.\\
\hline
$S_{12}$ &$x_1x_2y_{12}y_{13}^2y_{23}$ & Fixing point 13 and putting a linear condition on \\
&& line 1 fixes it. Then putting a linear condition on point 12 fixes \\
&& it on line 1. Then the linear condition on line 2 fixes it.\\
&&Then the linear condition on point 23 fixes it and line 3.\\
\hline
$S_{13}$ &$x_1x_2y_{13}^2y_{23}^2$& Fixing points 13 and 23 fixes line 3. Then putting linear\\
&&conditions on lines 1 and 2 fixes them\\
\hline
$S_{14}$ & $x_1y_{12}^2y_{13}y_{23}^2$& Fixing points 12 and 23 fixes line 2. Then giving a linear\\
&&condition on line 1 fixes it. Then putting a linear condition\\
&& on point 13 fixes it on line 1 which also fixes line 3.\\
\hline
$S_{15}$&$y_{12}^2y_{13}^2y_{23}^2$ & The points 12, 13, and 23 are fixed. This fixes lines 1,2, and 3.\\
\hline
\end{longtable}

Again since the complement $c(Z_2)=Z_2$ we have shown that the coefficients of all these monomials in $[M]$ are equal to 1. There is just one more monomial to consider. Let $Z_3=\{x_1x_2x_3y_{12}y_{13}y_{23}\}$. This set also has the property $c(Z_3)=Z_3$. Now we compute its coefficient. The class $x_1x_2x_3y_{12}y_{13}y_{23}$ puts a linear condition on each line and point. For the lines we can think of this as fixing three points $q_1$, $q_2$, and $q_3$ in general position and requiring that $p_i \in \ell_i$, for $i\in \{1,2,3\}$. Similarly, to interpret the linear conditions on the points $p_{ij}$ we fix three lines $L_{12}$, $L_{13}$, and $L_{23}$ in general position, and require that $p_{ij} \in L_{ij}$.

Suppose now that we choose an arbitrary point $p_{12}$ on line $L_{12}$. Fix a embedding $\phi :\P^1\to \P^2$ defined by $\phi([a:b])=[f_1(a:b):f_2(a:b):f_3(a:b)]$ where $\phi(\P^1)=L_{12}$ and each $f_i$ is homogeneous of degree 1. Then there exists $[a:b]\in \P^1$ such that $\phi ([a:b])=p_{12}$. Now since the points $q_1$ and $p_{12}$ are fixed we have that line 1, $\ell_1$, is fixed. Line $\ell_1$ fixed means that we can get $p_{13}=\ell_1\cap L_{13}$. Next we find $\ell_3$ is determined by the points $p_{13}$ and $q_3$. With $\ell_3$ we can get $p_{23}=\ell_3\cap L_{23}$. Now we make $\ell_2$ the line through the points $p_{23}$ and $q_2$. Since we chose $p_{12}$ randomly at the beginning of this process we have that $P=\ell_2\cap L_{12}$ may not be equal to $p_{12}$. Each step in the above process to determine each line and point is a cross product computation with a vector of constants (the line or point that is fixed before the choice of $p_{12}$) with a vector of homogenous degree 1 in the functions $f_i$. Hence there exist linear functions $g_1$ and $g_2$ such that $P=\phi([g_1(a:b),g_2(a:b)])$. Now $P=p_{12}$ if and only if $$\det \left[ \begin{array}{cc} g_1(a:b)&g_2(a:b)\\
a&b
\end{array}\right] =0.$$ Since this a homogeneous quadratic polynomial of two variables there must be 2 solutions up to multiplicity. Hence the coefficient of the monomial $x_1x_2x_3y_{12}y_{13}y_{23}$ is 2.

The total number of terms in $A_6$ 141 (this can be calculated by a standard inclusion-exclusion argument). Since $|Z_1|=78$, $|Z_2|=62$, and $|Z_3|=1$ we have computed the coefficients of all possible terms. Finally with any computer software system one can expand the polynomial $\prod_{1 \leq i < j \leq 3} (x_i
+ y_{ij})(x_j + y_{ij})$ and see that the coefficients match what we have just determined. This completes the proof. \end{proof}

\begin{remark} A generic element of $M$ can be thought of as a generic arrangement of 3 lines. The complement inside $M$ of this generic locus consists of several 5 dimensional subvarieties. These consist of arrangements where two lines are the same and arrangements consisting of a triple line, together with their marked points. \end{remark}

Now we determine the characteristic numbers for 3 lines.
 
\begin{theorem} \label{cn3}
The characteristic numbers $\N_3(p,6-p)$ for a generic arrangement
of 3 lines are given in the following table.

\begin{center}
\begin{tabular}{|l||l|l|l|l|l|l|l|}
\hline $p$ & 0 & 1 & 2 & 3 & 4 & 5 & 6 \\ \hline

$\N_3(p,6-p)$ & 15 & 30 & 48 & 57 & 48 & 30 & 15 \\ \hline
\end{tabular}
\end{center}

\end{theorem}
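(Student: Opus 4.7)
By Remark \ref{tangencyremark}, an arrangement of three lines is tangent to a generic line $L$ precisely when one of the three intersection points $p_{12},p_{13},p_{23}$ lies on $L$; in the Chow ring $A$ of $(\P^{2*})^3\times(\P^2)^3$ the class of the tangency condition is therefore $y_{12}+y_{13}+y_{23}$. Combining this with Lemma \ref{transpts} for the point conditions and invoking the transitive $\mathrm{PGL}(3)$-action on $\M_\A$, Kleiman's Transversality Theorem gives
$$6\,\N_3(p,6-p)\;=\;\deg\!\Big([M]\cdot (x_1+x_2+x_3)^p\,(y_{12}+y_{13}+y_{23})^{6-p}\Big),$$
where the factor $6=3!$ accounts for the labelings of the three lines.

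Since $x_i^3=0=y_{ij}^3$ in $A$, the only nonvanishing top-degree monomial is $\omega=x_1^2x_2^2x_3^2 y_{12}^2y_{13}^2y_{23}^2$, and the displayed degree is precisely the coefficient of $\omega$ in the product. Writing $Q_p=(x_1+x_2+x_3)^p(y_{12}+y_{13}+y_{23})^{6-p}$, this coefficient equals
$$\sum_{\omega=m\cdot m'} c_m([M])\cdot c_{m'}(Q_p),$$
where the sum runs over factorizations of $\omega$ into two degree-$6$ monomials. Lemma \ref{3linesM} supplies the coefficients of $[M]$: zero on the family $Z_1$, one on $Z_2$, and two on $x_1x_2x_3y_{12}y_{13}y_{23}$. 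The coefficients of $m'$ in $Q_p$ are standard multinomials, nonzero only when $m'$ has bidegree $(p,6-p)$ in the $(x,y)$ variables.

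As a sanity check, for $p=6$ the only contributing complementary pair is $m=y_{12}^2y_{13}^2y_{23}^2\in Z_2$ with $m'=x_1^2x_2^2x_3^2$, yielding $\binom{6}{2,2,2}=90$ and hence $\N_3(6,0)=15$, agreeing with Theorem \ref{generics}. By projective duality in $\P^2$, which exchanges the roles of the $x_i$ and the $y_{ij}$ in $[M]$, one has the palindromic symmetry $\N_3(p,6-p)=\N_3(6-p,p)$, which halves the remaining computation.

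The main obstacle is bookkeeping rather than conceptual difficulty: for each $p\in\{1,2,3,4,5\}$ one enumerates the monomials $m\in Z_2\cup Z_3$ whose complements $\omega/m$ have bidegree $(p,6-p)$, weights each by $c_m([M])$ together with the multinomial coefficient of $\omega/m$ in $Q_p$, and sums. Organized according to the symmetry types $S_1,\ldots,S_{15}$ in Lemma \ref{3linesM}, this is a finite mechanical tally, easily corroborated by expanding $[M]\cdot Q_p$ in a computer algebra system, which produces the values $30, 48, 57, 48, 30$ in the table.
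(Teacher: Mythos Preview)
Your proof is correct and follows essentially the same approach as the paper: compute the degree of $[M]\cdot(x_1+x_2+x_3)^p(y_{12}+y_{13}+y_{23})^{6-p}$ using Lemma~\ref{3linesM} and divide by $3!$. The one point the paper makes explicit that you leave implicit is that, since the non-generic locus of $M$ (double and triple lines with their marked points) is only $5$-dimensional, intersecting with six generic conditions lands entirely in the generic locus, so the degree genuinely counts labeled generic arrangements; you should state this rather than folding it silently into the appeal to Kleiman.
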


\begin{proof}
If we intersect $M$ with 6 generic conditions then we will obtain a finite number of points none of which will lie on the 5-dimensional non-generic subvarieties. So, when we multiply the class $[M]$ with that from Lemma \ref{transpts} we will only be counting generic arrangements of 3 lines. From Lemma \ref{3linesM} we have $$[M] = [\cap_{1 \leq i < j \leq 3} (p_{ij} \in \ell_i) \cap (p_{ij}
\in \ell_j)] = \prod_{1 \leq i < j \leq 3} (x_i
+ y_{ij})(x_j + y_{ij}).$$ Then the condition that the arrangement
$\A$ passes through a given point $p \in \P^2$ has class $x_1+x_2+x_3$. Similarly
the condition that a given line $L \in \P^2$ contains one of the
$p_{ij}$ has class $y_{12}+y_{13}+y_{23}$. Using the Moving Lemma, Lemma \ref{transpts}, and Remark \ref{tangencyremark} the degree of the class
$[M](x_1+x_2+x_3)^p(y_{12}+y_{13}+y_{23})^{6-p}$ in $A_*\left( \left(\P^{2*}\right)^3\times \left(\P^{2}\right)^3\right)$ counts the number of labeled 3-generic arrangements that pass
through $p$ general points and are tangent to $6-p$ general lines in
$\P^2$. To remove the effect of the labeling, we divide these numbers
by $3!$ to obtain the characteristic numbers. \end{proof}

\begin{remark}
The symmetry of $\N_3(p,6-p)$ is easily explained: the dual of a
3-generic arrangement through $p$ points $P_1,\ldots,P_p$ and tangent
to $6-p$ lines $L_1,\ldots,L_{6-p}$
is a 3-generic arrangement through the $6-p$ dual points
$\hat{L}_1,\ldots,\hat{L}_{6-p}$ and tangent to the $p$ dual lines
$\hat{P}_1,\ldots,\hat{P}_p$. This symmetry does not occur in the 4 lines case as one can see below.\end{remark}

Now we consider the 4 line case. We compute the class $M$ as before but we use transversality arguments in place of the method of undetermined coefficients.

\begin{lemma}\label{4linesM}
The set
$$ M = \left\{(\ell_1,\dots,\ell_4,p_{12},\dots,p_{34}) \in
\left( \P^{2*}\right)^{4}\times \left( \P^{2}\right)^{6} \; : \; p_{ij} \in
\ell_i \cap \ell_j \text{ for $1 \leq i < j \leq 4$} \right\}$$
is a quasi-projective variety of codimension 12 in $\left( \P^{2*}\right)^{4}\times \left( \P^{2}\right)^{6}$. The Chow ring of the ambient space is $$A=A\left(\left( \P^{2*}\right)^{4}\times \left( \P^{2}\right)^{6}\right) \cong \Z[x_1,\dots, x_4,y_{12},\dots , y_{34}]/(x_1^3,\dots,x_4^3,y_{12}^3,\dots,y_{34}^3)$$ where the $x_i$ correspond to the lines and the $y_{ij}$ correspond to the intersection points $p_{ij}$. In this ring the class of $M$ is $$[M]=\left[\cap_{1 \leq i < j \leq 4} \left( (p_{ij} \in \ell_i) \cap (p_{ij}
\in \ell_j)\right)\right] = \prod_{1 \leq i < j \leq 4} (x_i
+ y_{ij})(x_j + y_{ij}) .$$

\end{lemma}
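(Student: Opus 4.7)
The strategy is to show that $M$ arises as the transverse intersection in $X=(\P^{2*})^4\times(\P^2)^6$ of the twelve natural incidence divisors $\{p_{ij}\in\ell_i\}$ and $\{p_{ij}\in\ell_j\}$, one pair for each $1\le i<j\le 4$. On the smooth ambient $X$ the class $[M]$ will then be the product of the classes of these twelve divisors by standard intersection theory, replacing the exhaustive coefficient check used in Lemma \ref{3linesM}. Each divisor is pulled back from the universal incidence $\{(\ell,p)\in\P^{2*}\times\P^2:p\in\ell\}$, whose class is the well known $x+y$; hence $\{p_{ij}\in\ell_i\}$ has class $x_i+y_{ij}$ and $\{p_{ij}\in\ell_j\}$ has class $x_j+y_{ij}$. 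What remains is to show that these twelve divisors meet properly and transversely at a generic point of their intersection.

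The dimension count is clean: the rational map $(\P^{2*})^4\dashrightarrow X$ sending $(\ell_1,\ldots,\ell_4)$ to $(\ell_1,\ldots,\ell_4,\ell_1\cap\ell_2,\ldots,\ell_3\cap\ell_4)$ is a regular morphism on the open locus where all pairs of lines are distinct, and is injective onto its image, which is dense in $M$. Thus $M$ is irreducible of dimension $8$ and codimension $12$, matching the expected codimension of the intersection of the twelve divisors. For transversality at a generic point, pick four lines in general linear position with their six distinct pairwise intersections, and work in a local affine chart with parameters $(a_i,b_i)$ for each $\ell_i$ and $(u_{ij},v_{ij})$ for each $p_{ij}$. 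The linearization of $p_{ij}\in\ell_i$ involves only the four coordinates $a_i,b_i,u_{ij},v_{ij}$, and the variables $u_{ij},v_{ij}$ occur in no other of the twelve equations. Consequently any linear dependence among the twelve rows of the Jacobian must restrict, for each pair $(i,j)$, to a dependence between its two rows alone; but since $\ell_i\neq\ell_j$, those two restrictions to $(u_{ij},v_{ij})$ are linearly independent. Hence the $12\times 20$ Jacobian has full rank $12$.

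The main obstacle in executing this plan is formalizing this block-independence argument cleanly, i.e.\ showing that each pair $(i,j)$ is effectively decoupled from the others through its private coordinates $(u_{ij},v_{ij})$, ruling out subtler relations. Once transversality is established, the scheme-theoretic intersection of the twelve divisors is a local complete intersection of the expected codimension at the generic point of $M$, so $M$ appears with multiplicity one in the resulting cycle. Loci in $X$ where two or more lines coincide contribute only lower-dimensional components---when $\ell_i=\ell_j$ the two conditions on $p_{ij}$ collapse to a single constraint, so the effective codimension drops below $12$---and therefore do not affect the top-dimensional class. Putting these pieces together gives $[M]=\prod_{1\le i<j\le 4}(x_i+y_{ij})(x_j+y_{ij})$.
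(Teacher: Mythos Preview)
Your overall strategy matches the paper's: identify $M$ as the intersection of the twelve incidence divisors, verify that this intersection has the expected codimension $12$, and invoke the local-complete-intersection result (e.g.\ \cite[Theorem~5.10]{EH}) to conclude that $[M]$ is the product of the twelve divisor classes $x_i+y_{ij}$ and $x_j+y_{ij}$. Your Jacobian transversality argument at a generic point is fine (and in fact stronger than what is needed for the class computation), but the dimension analysis over the degenerate loci is where your argument fails.

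You assert that $M$ is irreducible and that ``loci in $X$ where two or more lines coincide contribute only lower-dimensional components.'' This is false, and your own heuristic is pointing in the wrong direction: if codimension \emph{drops} because constraints collapse, then dimension \emph{rises}, not falls. What actually happens is a trade-off between the constraints you lose on the $p_{ij}$ and the constraints you impose on the $\ell_i$, and you have to check each stratum separately. The paper stratifies $(\P^{2*})^4$ into five loci---generic ($G$), one double line ($T$), two double lines ($D$), a triple line ($H$), and a quadruple line ($F$)---and computes the fiber of $\pi:M\to(\P^{2*})^4$ over each. Over $T$, $D$, $H$ the preimages have dimension $7$, $6$, $7$, as you would hope. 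But over $F$ (all four lines equal, $\dim F=2$) every one of the six $p_{ij}$ is free to move along the common line, so the fiber is $6$-dimensional and $\pi^{-1}(F)$ has dimension $2+6=8$. Thus $M$ has a second $8$-dimensional component besides the closure of the generic locus, and $M$ is \emph{not} irreducible. This extra component is not a nuisance to be dismissed: it carries part of the class $[M]$ and is explicitly used in the proof of Theorem~\ref{cn4}, where one writes $[M]=[G]+[F]$ and subtracts the quadruple-line contributions to obtain the characteristic numbers.

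Once the stratification shows that every stratum has dimension at most $8$, you do get $\operatorname{codim} M=12$, the intersection of the twelve divisors is a local complete intersection, and the product formula for $[M]$ follows. So your plan is salvageable, but you must replace the incorrect irreducibility/lower-dimension claim with the stratified fiber-dimension computation above.
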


\begin{proof}

As in Lemma \ref{3linesM} set $\pi_i : \left( \P^{2*}\right)^4 \times \left( \P^2 \right)^6\to \P^{2*}$ to be the projection to the factor corresponding to the line $\ell_i$ and $\pi_{ij} : \left( \P^{2*}\right)^4 \times \left( \P^2 \right)^6\to \P^{2}$ to be the projection to the factor corresponding to the point $p_{ij}$. The condition $p_{ij}\in \ell_i$ is a bilinear hypersurface in $\left( \P^{2*}\right)^{4}\times \left( \P^{2}\right)^{6}$ whose class is $x_i+y_{ij}$. The closure of the variety $M$ is the intersection of these 12 hypersurfaces. We will show that this is a local complete intersection.

Put $\pi=\pi_1\times \pi_2\times \pi_3\times \pi_4$. Examine the restriction $\pi :M \to \left(\P^{2*}\right)^4$. Now stratify $\left(\P^{2*}\right)^4$ into the following quasi-projective subvarieties $G=$ ``generic arrangements '', $T=$ ``arrangements where two lines are equal and the other two are generic'', $D=$ ``arrangements where there are two generic double lines'', $H=$``arrangements where three lines are equal and the last one is generic'', and $F=$ ``arrangements where all four lines are equal''. So, $\left(\P^{2*}\right)^4=G\cup D\cup T\cup H\cup F$ and all the unions are disjoint. The dimension of $G$ is 8 and for any $x\in G$ the fiber $\pi^{-1}(x)$ is a unique point. Hence $\pi^{-1}(G)$ is 8-dimensional. The dimension of $T$ is 6 and for any $x\in T$ the fiber $\pi^{-1}(x)$ is 1-dimensional. Hence $\pi^{-1}(T)$ is 7-dimensional. The dimension of $D$ is 4 and for $x\in D$ the fiber $\pi^{-1}(x)$ is 2-dimensional. Hence $\pi^{-1}(D)$ is 6-dimensional. The dimension of $H$ is 4 and for $x\in H$ the fiber $\pi^{-1}(x)$ is 3-dimensional. Hence $\pi^{-1}(H)$ is 7-dimensional. The dimension of $F$ is 2 and for $x\in F$ the fiber $\pi^{-1}(x)$ is 6-dimensional. Hence $\pi^{-1}(F)$ is 8-dimensional. Since $M=\pi^{-1}(G)\cup\pi^{-1}(T)\cup\pi^{-1}(D)\cup\pi^{-1}(H)\cup\pi^{-1}(F)$ we have shown that $M$ is 8-dimensional.

Hence $M$ is a local complete intersection in $\left( \P^{2*}\right)^4 \times \left( \P^2 \right)^6$. Using Theorem 5.10 in \cite{EH} the class of the intersection $[M]$ is the product of the classes $x_i+y_{ij}$.
\end{proof}

\begin{theorem} \label{cn4}
The characteristic numbers $\N_4(p,8-p)$ for a generic arrangement
of 4 lines are given in the following table.

\begin{center}
\begin{tabular}{|l||l|l|l|l|l|l|l|l|l|}
\hline $p$ & 0 & 1 & 2 & 3 & 4 & 5 & 6 & 7 & 8 \\ \hline

$\N_4(p,8-p)$
& 16695 & 17955 & 13185 & 8190 & 4410 & 2070 & 855 & 315 & 105
\\ \hline
\end{tabular}
\end{center}

\end{theorem}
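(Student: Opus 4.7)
The strategy mirrors the proof of Theorem~\ref{cn3} but uses Lemma~\ref{4linesM} in place of Lemma~\ref{3linesM}, and relies on transversality arguments rather than the method of undetermined coefficients. From Lemma~\ref{4linesM}, we already have
$$[M] = \prod_{1 \leq i < j \leq 4} (x_i + y_{ij})(x_j + y_{ij})$$
in the Chow ring $A$ of $(\P^{2*})^4 \times (\P^2)^6$. By Lemma~\ref{transpts}, the class of the $p$ generic point conditions on the arrangement is $(x_1+x_2+x_3+x_4)^p$. The analogue of Remark~\ref{tangencyremark} for $4$ lines holds by the same argument applied to the defining quartic $\ell_1 \ell_2 \ell_3 \ell_4$: a generic arrangement is tangent to a line $L$ iff the restriction of the quartic to $L$ has a double root, which happens iff some intersection point $p_{ij}$ lies on $L$. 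Each tangency condition therefore contributes the class $\sum_{1\leq i<j\leq 4} y_{ij}$, and the combined class for $8-p$ tangencies is $(\sum_{i<j}y_{ij})^{8-p}$.

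Transversality of the entire intersection with $M$ follows from Kleiman's theorem via the $\mathrm{PGL}(2)$-action on $\P^2$. Hence the labeled count of generic $4$-line arrangements through $p$ generic points and tangent to $8-p$ generic lines is the degree in $A$ of
$$[M]\cdot (x_1+x_2+x_3+x_4)^p \cdot \Bigl(\textstyle\sum_{1\leq i<j\leq 4} y_{ij}\Bigr)^{8-p},$$
and dividing by $|\S_4|=24$ removes the effect of the ordering and produces $\N_4(p,8-p)$. The remaining work is to expand this degree-$20$ polynomial modulo $x_i^3=y_{ij}^3=0$ and read off the coefficient of the top class $x_1^2x_2^2x_3^2x_4^2 y_{12}^2 y_{13}^2 y_{14}^2 y_{23}^2 y_{24}^2 y_{34}^2$ for each $p\in\{0,\ldots,8\}$; this is a mechanical but sizeable symbolic computation best carried out in a computer algebra system.

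The principal obstacle, absent in Theorem~\ref{cn3}, is that Lemma~\ref{4linesM} exhibits $M$ as a union of strata $\pi^{-1}(G),\pi^{-1}(T),\pi^{-1}(D),\pi^{-1}(H),\pi^{-1}(F)$ in which both the generic stratum $\pi^{-1}(G)$ and the ``four coincident lines'' stratum $\pi^{-1}(F)$ have the maximal dimension~$8$. The dimension argument used in the proof of Theorem~\ref{cn3} therefore does not immediately exclude spurious contributions from $\pi^{-1}(F)$ (or from the lower-dimensional strata $\pi^{-1}(T), \pi^{-1}(H)$). One must verify case by case that the $p+(8-p)=8$ generic incidence conditions cannot be jointly realized on these degenerate loci: for $p\geq 3$ this is automatic since four coincident lines cannot pass through three non-collinear points, while for $p<3$ the required tangency conditions force incidences among the $p_{ij}$ that are inconsistent with the general position of the tangent lines. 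With this verification in hand, the Chow degrees above compute exactly the labeled characteristic numbers, and the table of values follows.
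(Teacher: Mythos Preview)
Your overall setup matches the paper's: compute the degree of $[M]\cdot(x_1+\cdots+x_4)^p\cdot\bigl(\sum_{i<j} y_{ij}\bigr)^{8-p}$, isolate the contribution of the generic component, and divide by $4!$. The lower-dimensional strata $\pi^{-1}(T)$, $\pi^{-1}(D)$, $\pi^{-1}(H)$ (of dimensions $7$, $6$, $7$) are indeed harmless, since eight generic divisorial conditions on an $8$-dimensional variety miss any proper closed subvariety. The real issue is $\pi^{-1}(F)$, and there your argument breaks.

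You assert that for $p<3$ ``the required tangency conditions force incidences among the $p_{ij}$ that are inconsistent with the general position of the tangent lines.'' This is false. For $p=0$, choose two disjoint pairs $\{L_a,L_b\}$ and $\{L_c,L_d\}$ among the eight given lines and let $\ell$ be the line through $L_a\cap L_b$ and $L_c\cap L_d$. Set $\ell_1=\ell_2=\ell_3=\ell_4=\ell$; then $\ell$ meets the eight $L_k$ in exactly six distinct points, and any bijection from these six points to the labels $\{p_{ij}\}$ yields a point of $M$ satisfying all eight tangency conditions. There are $\tfrac{1}{2}\binom{8}{2,2,4}\cdot 6!=151200$ such labeled configurations, all lying in $\pi^{-1}(F)$ and all counted by the Chow degree. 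Analogous constructions (with $\ell$ determined by the point condition together with one, respectively zero, of the coincidences $L_a\cap L_b$) give nonzero contributions for $p=1$ and $p=2$ as well.

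The paper handles this by writing $[M]=[F]+[G]$ (each top component with multiplicity one, checked by specialization), computing the raw degree, and then explicitly enumerating and subtracting the quadruple-line solutions for $p\in\{0,1,2\}$ before dividing by $4!$. Without that subtraction your computation overshoots: at $p=0$ the raw degree divided by $24$ gives $16695+151200/24=22995$, not $16695$. Your claim for $p\ge 3$ is correct, since a single line cannot contain three points in general position.
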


\begin{proof}

We use an argument similar to the proof of Theorem \ref{cn3} but there is a subtlety. The component $F$ from Lemma \ref{4linesM} consisting of quadruple lines has the same dimension as the component $G$ of generic arrangements of 4 lines. It follows that $[M] = a[F] + b[G]$ for some integers $a$ and $b$. Specialize the equations that cut out $M$ to a fixed generic arrangement $\A$ of 4 lines to see that there is just one element of $M$ lying over $\A$, with multiplicity one; this shows that $a = 1$. Similarly, if we specialize the equations that cut out $M$ to $\A$, a fixed quadruple line with six fixed marked points $p_{ij}$, we again see that the fiber of $M$ lying over $\A$ is a reduced point in $\M$; so $b$ = 1 and $[M] = [F] + [G]$. Let $C = (x_1+x_2+x_3+x_4)^p(y_{12}+y_{13}+y_{14}+y_{23}+y_{24}+y_{34})^{8-p}$. The degree of $[M]C = [F]C + [G]C$ also counts some instances in which all the $\ell_i$ are equal (coming from the term $[F]C$). For
instance, in the case where $p=0$, we seek elements in $M$ with some
$p_{ij}$ on each of $8$ lines $L_1, \ldots, L_8$. If we set all the
$\ell_i$ equal to the line joining $L_i\cap L_j$ to $L_s \cap L_t$
then the common line $\ell_i$ meets the 8 lines $L_j$ in just
$\binom{4}{2}=6$ points. Labeling these points with the six points
$p_{ij}$ gives an element of $M$ that is included in our degree count
but that doesn't correspond to a 4-generic arrangement; this is
illustrated in the left picture of Figure \ref{4lines}, where the
thick line represents the common line $\ell_i$. There are
$\binom{8}{2,2,4}6!/2$ ways to produce such examples. Removing these
from the degree count, we obtain the degree of $[G]C$ and dividing the result by 4! to account for the
effect of labeling, we obtain the value of $N_4(0,8)$ in the Table.

\begin{figure}
\begin{center}
\scalebox{1.05}{\includegraphics{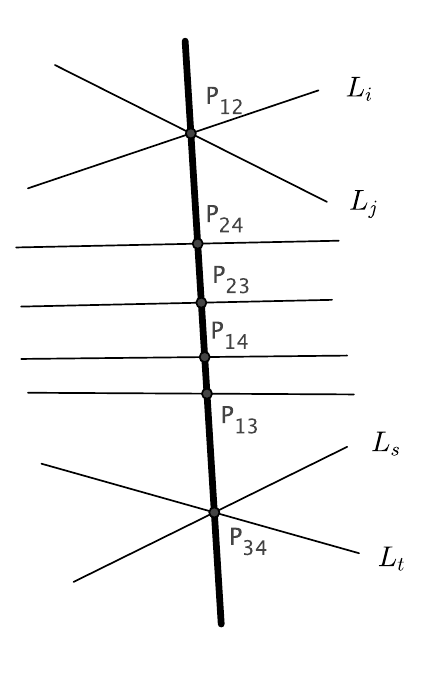}}
\scalebox{1.05}{\includegraphics{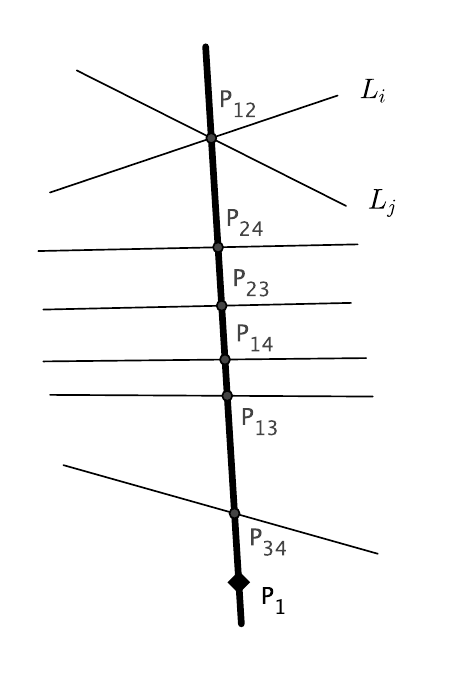}}
\scalebox{1.05}{\includegraphics{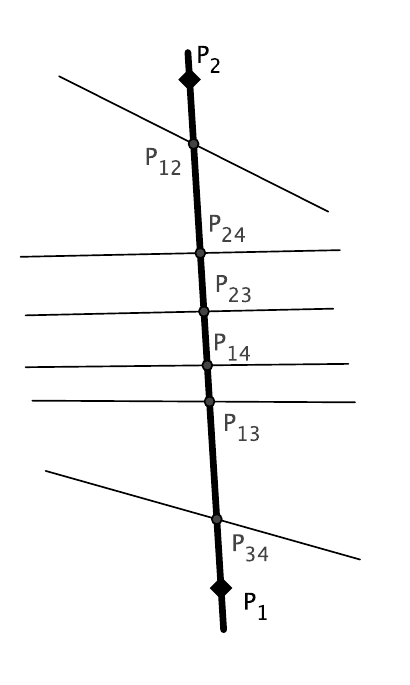}}
\end{center}
\caption{Quadruple lines counted by the degree computation
for $p=0$ (left), $p=1$ (middle), $p=2$ (right).}
\label{4lines}
\end{figure}

Similarly, quadruple lines are counted in the degree
computations for $p=1$ and $p=2$, as illustrated in the middle and
right pictures of Figure \ref{4lines}. This requires an adjustment of
$\binom{8}{2,6}6!$ in the case of $p=1$ and $6!$ in the case of
$p=2$, giving rise to the values for $N_4(p,8-p)$ in the statement of the theorem.
\end{proof}

\begin{remark} It would be interesting to have an explanation for the unimodality of
the characteristic numbers $N_k(p,2k-p)$. In
particular, do the characteristic numbers
$\{ N_k(p,2k-p)\}$ for generic arrangements of $k$ lines always form a unimodal sequence?
\end{remark}

\begin{remark} \label{braidr}
Taking the dual of the $4$-generic arrangement yields a braid line
arrangement $\mathcal{A}_3$, an arrangement of $6$ lines meeting in 4 triple points
and 3 double points, pictured in Figure \ref{braid}. This is related
to the braid arrangement (the reflecting hyperplanes of the
action of $\S_4$ on the variables of $\C^4$), $\A_3 = \{x_i = x_j \; : \; 1 \leq i
< j \leq 4\}$ in $\C^4$ which consists of 6 hyperplanes, each
containing the line $\{x_1=x_2=x_3=x_4\} \subset \C^4$. Quotienting
out the common line and projectivizing gives the braid arrangement in $\P^2$.

\begin{figure}[h!t]
\begin{center}
\scalebox{0.5}{\includegraphics{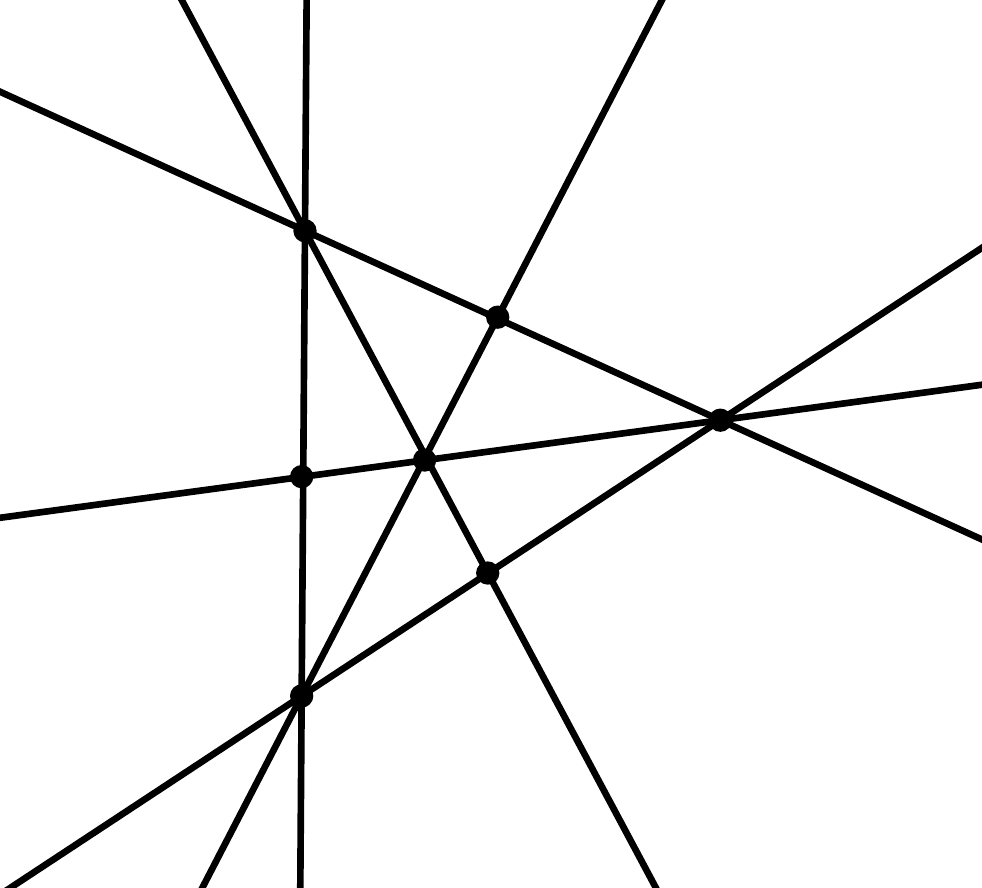}}
\end{center}
\caption{The braid arrangement $\mathcal{A}_3$ } \label{braid}
\end{figure}

The moduli space $\M_{\mathcal{A}_3}$ has the same dimension, 8, as the
moduli space of the dual of $\mathcal{A}_3$. The characteristic numbers
$N_{\mathcal{A}_3}(p,8-p)$ counting the number of arrangements
combinatorially equivalent to the braid arrangement that pass through
$p$ points and are tangent to $8-p$ lines in general position are
given by $N_4(8-p,p)$. For example, there are 16695 braid arrangements
through 8 points in general position in $\P^2$, a result initially
reported in Paul \cite{Paul}.
\end{remark}

The characteristic numbers are important because they determine the
answer to all enumerative questions involving points and tangency to
arbitrary smooth curves (see \cite[section 10.4]{Fulton} and the
references therein for the history of this result).

\begin{theorem} \label{FKMt}
Let $\L_\A$ be the intersection lattice of a line arrangement and let
$t = \dim \MLA$. The number of line arrangements with intersection
lattice isomorphic to $\L_\A$ through $p$ points and tangent to $t-p$
smooth curves of degrees $n_1, \ldots, n_{t-p}$ and classes\footnote{The class of a curve is the number of lines passing
through a given general point and tangent to the curve at a simple
point. For example, the class of a smooth curve of degree $d$ is $d(d-1)$.} $m_1,
\ldots, m_{t-p}$ in general position is obtained from the
product $$ \mu^p \prod_{i=1}^{t-p} (m_i \mu + n_i\nu) $$ by expanding the
polynomial and replacing the monomial $\mu^k \nu^{t-k}$ by the
associated characteristic number -- the number
of arrangements with lattice type $\L_\A$ passing through $k$ general
points and tangent to $t-k$ lines.
\end{theorem}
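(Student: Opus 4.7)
The plan is to realize both the ``arrangement passes through a point'' and ``arrangement is tangent to a line'' conditions as divisor classes $\mu$ and $\nu$ in the Chow ring of an ambient compactification of $\MLA$, to show that for each smooth curve $C_i$ of degree $n_i$ and class $m_i$ in general position the tangency divisor has class $m_i\mu + n_i\nu$, and then to conclude by expanding the product and invoking Kleiman transversality. Following Lemmas \ref{3linesM} and \ref{4linesM}, I would embed the closure of $\MLA$ in $(\P^{2*})^k \times (\P^2)^r$, where $k=|\A|$ and $r$ is the number of codimension-two flats of $\L_\A$, cut out by the incidences $p_j \in \ell_i$ dictated by the lattice. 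As in Lemma \ref{transpts}, the point-condition class is $\mu = \sum_i x_i$, and as suggested by Remark \ref{tangencyremark}, the line-tangency class is $\nu = \sum_j y_j$.

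\medskip

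The core step is the identity $[T_C] = m\mu + n\nu$ for the divisor $T_C$ cut out by requiring tangency to a smooth curve $C$ of degree $n$ and class $m$ in general position. Since $C$ is generic, the tangency of $\bigcup_i \ell_i$ to $C$ decomposes as the union of two loci: (a) ``some line $\ell_i$ is tangent to $C$,'' and (b) ``some intersection point $p_j$ of the arrangement lies on $C$;'' generically these two loci meet in codimension at least two. In case (a), the locus is the pullback $\pi_i^{-1}(C^*)$ of the dual curve $C^* \subseteq \P^{2*}$, which has degree equal to the class $m$ of $C$, so its class is $m x_i$; summing over $i$ yields $m\mu$. In case (b), the locus is $\pi_j^{-1}(C)$ with class $n y_j$; summing over $j$ yields $n\nu$. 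Adding the two contributions gives the claimed class.

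\medskip

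With this formula in hand, the theorem reduces to standard intersection theory on $M$. Imposing $p$ general point conditions and $t-p$ general tangency conditions, Kleiman's transversality theorem applied to the diagonal projective linear action on $\P^2$ (moving each imposed point or curve independently) guarantees that the resulting intersection is transverse and zero-dimensional, so the total count is the degree of
\[
[M]\cdot \mu^p \prod_{i=1}^{t-p}(m_i\mu + n_i\nu)
\]
in the Chow ring of the ambient product. Expanding, the degree of $[M]\cdot \mu^k \nu^{t-k}$ counts, up to the same labeling factor that normalizes $N_\A(k,t-k)$ in Section \ref{section:generic}, the arrangements with lattice $\L_\A$ through $k$ general points and tangent to $t-k$ general lines. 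Since both sides carry the same labeling factor, the substitution $\mu^k\nu^{t-k}\mapsto N_\A(k,t-k)$ yields the stated count.

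\medskip

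\noindent\textbf{Main obstacle.} The subtle point is verifying $[T_C]=m\mu+n\nu$ rigorously: one must show that cases (a) and (b) truly cover $T_C$ on an open dense locus of $\MLA$, that their common intersection has codimension at least two (so no excess-intersection correction appears), and that the formula persists along the boundary of $\MLA$ inside $M$, which need not be a local complete intersection for an arbitrary $\L_\A$. A clean route is a specialization argument within a family of smooth curves of fixed degree and class, combined with a stratification of the non-generic locus of $\MLA$ in the spirit of the proof of Lemma \ref{4linesM}, to rule out excess contributions on each stratum by a dimension count.
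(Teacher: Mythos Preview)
Your approach is correct in outline, but it differs substantially from the paper's. The paper does not reconstruct the intersection-theoretic computation of $[T_C]$ at all; it simply invokes the general Fulton--MacPherson result on characteristic numbers for families of plane curves \cite[section~10.4]{Fulton}, observing that a point condition may be interpreted formally as tangency to a ``curve'' of degree $0$ and class $1$ (so contributing a factor $1\cdot\mu+0\cdot\nu=\mu$), or alternatively that one may first cut $\MLA$ down by the $p$ point conditions and then apply the cited theorem to the resulting $(t-p)$-dimensional family. Either way the proof is a one-line citation. Your route---realizing $\mu=\sum_i x_i$ and $\nu=\sum_j y_j$ on an explicit compactification, decomposing the tangency locus for a smooth $C$ into ``some $\ell_i$ lies on the dual curve $C^*$'' and ``some node $p_j$ lies on $C$,'' and reading off $[T_C]=m\mu+n\nu$ from $\deg C^*=m$ and $\deg C=n$---is essentially a reconstruction of the Fulton--MacPherson argument specialized to line arrangements. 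It is more self-contained and makes the geometric origin of the factor $m_i\mu+n_i\nu$ transparent, but, exactly as you flag in your ``Main obstacle,'' it obliges you to verify transversality and rule out excess contributions along the boundary of $\MLA$ for an arbitrary lattice $\L_\A$, issues the paper's citation simply sidesteps. The paper buys brevity and full generality at the price of a black box; your argument buys geometric insight at the price of those verifications.
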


\begin{proof}
The proof follows immediately from the argument due to Fulton
and MacPherson in \cite[section 10.4]{Fulton} if we interpret each
point condition as a tangency condition to a curve of degree 0 and
class 1. Alternatively, we can work with the dimension $(t-p)$ family of arrangements in $\M_\A$ that pass through the specified $p$ points, apply Fulton and MacPherson's theorem and then re-interpret the results in terms of the polynomial displayed above.
\end{proof}

\begin{example}
To compute the number of generic arrangements of 4 lines through 3
points, tangent to a given line and tangent to four smooth conics, we
expand the product $\mu^3(2\mu + 2\nu)^4\nu^1$ and replace the
monomials with the appropriate quantities from Theorem \ref{cn4}. The
answer
is $$2^4(315)+2^4\binom{4}{1}(855)+2^4\binom{4}{2}(2070)+2^4\binom{4}{3}(4410)+
2^4(8190) = 671760. $$
\end{example}

\begin{remark}
  The
  computation of $N_k(p,2k-p)$ becomes more intricate as $k$ increases. For instance,
  using the method of proof from Theorem \ref{cn4} to compute
  $N_5(0,10)$, we see that every quintuple line determines a point in
  the incidence correspondence $M$. This higher-dimensional component
  of the solution space (isomorphic to $\P^2$) ``counts'' as a finite number of points in the
  standard intersection theory computation of $N_5(0,10)$. The
  appropriate count is called the excess intersection of this
  component and can be computed using the Excess
  Intersection Formula \cite[section 6.3]{Fulton}. We do not pursue
  this approach further here.

\end{remark}

\section{Pencils and Cones Over Generic Arrangements}
\label{section:dconed}

An arrangement of lines is said to be a pencil if all lines pass
through a common point. Such an arrangement is also a $0$-coned
generic arrangement. We start this section with a simple enumerative
result about pencils.

\begin{theorem}
If $\A$ is a pencil of $k$ lines in $\P^2$ then $\dim \MLA = k+2$ and
the characteristic numbers are $\N_\A(k+2,0) = 3\binom{k+2}{4}$,
$\N_\A(k+1,1) = \binom{k+1}{2}$, $\N_\A(k,2)=1$. All other
characteristic numbers are 0.
\end{theorem}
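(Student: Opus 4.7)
The plan is to parameterize the moduli space of pencils directly and then reduce each characteristic number to an elementary combinatorial count, rather than appealing to the intersection-theoretic setup of Section \ref{section:generic}. A pencil is specified by its common point $P \in \P^2$ together with the $k$ unordered lines through $P$; since $P$ contributes $2$ parameters and each line through a fixed point is a point of $\P^1$, this immediately gives $\dim \M_\A = k+2$.

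The key preliminary observation, analogous to Remark \ref{tangencyremark}, is that a pencil is tangent to a line $L$ if and only if $P \in L$. Indeed, the restriction to $L$ of the product of the pencil's $k$ defining linear forms is a degree-$k$ polynomial on $L \cong \P^1$ whose roots are the points $\ell_i \cap L$; if $P \in L$ these all coincide at $P$, producing a root of multiplicity $k \geq 2$, while if $P \notin L$ they are pairwise distinct, since two distinct lines of the pencil meet only at $P$.

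With this dictionary, the characteristic numbers reduce to combinatorics. If $\ell \geq 3$, the tangencies force $P$ to lie on three general lines, which is impossible. If $\ell = 2$ and $p = k$, the point $P = L_1 \cap L_2$ is rigidly determined, and the $k$ given points then specify the $k$ lines of the pencil as their joins to $P$, yielding $\N_\A(k,2)=1$. For $\ell = 1$ and $p = k+1$, write $d,e,f$ for the numbers of lines of the pencil containing $2,1,0$ of the given points, respectively; general position (no three points collinear) forces $d+e+f=k$ and $2d+e=k+1$, so $d=f+1 \geq 1$, and only the type $(d,f)=(1,0)$ contributes for generic data. Choosing the doubled pair in $\binom{k+1}{2}$ ways and taking $P$ to be the intersection of their joining line with $L$ determines the pencil, giving $\N_\A(k+1,1)=\binom{k+1}{2}$. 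For $\ell = 0$ and $p = k+2$ the analogous system gives $d=f+2 \geq 2$, and only $(d,f)=(2,0)$ contributes; one chooses four of the $k+2$ points in $\binom{k+2}{4}$ ways, partitions them into two unordered pairs in $3$ ways, and takes $P$ to be the intersection of the two lines so determined, giving $3\binom{k+2}{4}$.

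The main obstacle is justifying the restriction to the minimal partition types, i.e., ruling out contributions from types with $f \geq 1$. Such types superficially describe positive-dimensional strata of the incidence variety (the empty lines are free parameters through $P$), but once the doubled lines fix $P$ the remaining tangency or point conditions become codimension-$1$ conditions on generic data and are unsatisfiable. A short transversality check inside $\M_\A$ --- equivalently, a verification that the higher-$d$ strata either are empty or fail to meet the generic incidence conditions in the expected codimension --- is what closes the argument.
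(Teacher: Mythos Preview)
Your proof is correct and follows essentially the same route as the paper's: both use the tangency criterion $P\in L$ and then reduce each characteristic number to an elementary count of how the given points distribute among the $k$ lines of the pencil. Your dimension computation via the parametrization $(P,\ell_1,\ldots,\ell_k)\in\P^2\times(\P^1)^k$ is a bit cleaner than the paper's finiteness-at-$(k+2)$-points argument, and you are more explicit about ruling out the higher partition types $(d,f)$ with $f\geq 1$ (the paper simply asserts the $(d,f)=(2,0)$ and $(1,0)$ descriptions without further comment), but the substance is the same.
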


\begin{proof}
  By the Pigeonhole Principle, if a pencil $\A$ of $k$ lines passes
  through more than $k+3$ points in general position then 3 of the
  lines must each contain 2 points and all three lines must be
  coincident, but this means that these 6 points must satisfy an
  algebraic relation and thus violates the assumption that the points
  were in general position. As well, it is easy to see that there
  are a finite number of pencils $\A$ of $k$ lines passing through $k+2$ points
  in general position (two lines contain 2 points each and intersect
  in the node of the pencil, the other $k-2$ lines each contain one of
  the remaining points) so $\dim \MLA = k+2$. There are $N_\A(k+2,0) =
\binom{k+2}{2,2,k-2}/2 = 3 \binom{k+2}{4}$ ways to create such an
  arrangement.

  To compute $N_\A(k+1,1)$ note that the node of the pencil must lie
  on the given line. The arrangement must have one line containing $2$
  of the $k+1$ points and this line intersects the given line at the
  node of the pencil; the remaining $k-1$ lines in the pencil each
  contain 1 point and the node. There are $N_\A(k+1,1) = \binom{k+1}{2}$ ways to
  choose the first line in the pencil; the rest of the construction is determined. Finally, to compute
  $N_\A(k,2)$ note that the node must lie at the intersection of the
  two given lines and that each line in the pencil must pass through
  the node and one of the $k$ given points. So $\A$ is uniquely
  determined. \end{proof}

We proceed to study $d$-coned generic arrangements with $d\geq 1$. Recall
from Definition \ref{dconed} that an arrangement $\A$ of $k>n$
hyperplanes in $\P^n$ is a $d$-coned generic arrangement if $\A$ can
be obtained from a generic hyperplane arrangement $\mathcal{B}$ in a
linear subspace $H \cong \P^{n-d-1}$ by taking the cone with a
$d$-dimensional linear space (equivalently, with $d+1$ general
points). Given a $d$-coned generic arrangement $\A$ in $\P^n$, the
intersection with a general linear space $H$ of dimension $n-d-1$ is a
generic hyperplane arrangement in $\P^{n-d-1}$.

\begin{lemma} \label{dimlemma} Let $\A$ be a $d$-coned generic
  arrangement of $k$ hyperplanes in $\P^n$. Then the dimension of the
  moduli space $\M_\A$ is $D = (d+1)(n-d) + k(n-d-1)$.
\end{lemma}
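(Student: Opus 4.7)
The plan is to parameterize $d$-coned generic arrangements by pairs $(\Omega,(H_1,\ldots,H_k))$ where $\Omega$ is the apex $d$-plane and each $H_i$ is a hyperplane of $\P^n$ containing $\Omega$, subject to the condition that the induced arrangement in $\P^n/\Omega \cong \P^{n-d-1}$ is generic. First I would verify that $\Omega$ is uniquely recoverable from $\A$ as $\Omega = \bigcap_{i=1}^k H_i$: since the associated generic arrangement $\mathcal{B}$ in $\P^{n-d-1}$ has $k > n \geq n-d$ hyperplanes, genericity forces $\bigcap_i B_i = \emptyset$ in $\P^{n-d-1}$, and so the full intersection of the cones in $\P^n$ is precisely $\Omega$.

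Next I would set up the parameter space
$$ X = \{(\Omega,(H_1,\ldots,H_k)) : \Omega \text{ a } d\text{-plane in } \P^n,\ H_i \supset \Omega,\ \text{induced arrangement generic}\}, $$
and compute its dimension directly. The Grassmannian of $d$-planes in $\P^n$ has dimension $(d+1)(n-d)$. For a fixed $\Omega$, the hyperplanes of $\P^n$ containing $\Omega$ correspond bijectively to hyperplanes in the quotient $\P^n/\Omega \cong \P^{n-d-1}$, hence form a projective space of dimension $n-d-1$; the ordered $k$-tuples $(H_1,\ldots,H_k)$ therefore contribute $k(n-d-1)$ dimensions, and the generic-arrangement condition on the induced lines is a non-empty Zariski-open condition. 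Thus $\dim X = (d+1)(n-d) + k(n-d-1)$.

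Finally, the forgetful map $\phi\colon X \to \M_\A$ sending $(\Omega,(H_1,\ldots,H_k))$ to the unordered arrangement $\{H_1,\ldots,H_k\}$ surjects onto the open locus of $d$-coned generic arrangements by the definition of $\M_\A$, and by the first step each fiber is precisely an $\S_k$-orbit of orderings of a given arrangement. Hence $\phi$ is generically $k!$-to-$1$, so $\dim \M_\A = \dim X = (d+1)(n-d) + k(n-d-1)$. The main subtlety will be presenting $X$ cleanly as a Zariski-open subset of a fiber bundle over the Grassmannian whose fibers are products of projective spaces of dimension $n-d-1$, so that the dimension count and the finiteness of the generic fibers of $\phi$ are rigorous rather than merely heuristic; once that bundle structure is in place, irreducibility of $X$ and the dimension equality follow at once.
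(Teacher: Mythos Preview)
Your proposal is correct and follows essentially the same approach as the paper: parameterize by a $d$-plane $\Lambda$ in the Grassmannian $\G(d,n)$ together with $k$ hyperplanes of the quotient $\P^{n-d-1}$, and read off the dimension as $\dim \G(d,n) + k(n-d-1)$. You are in fact more careful than the paper on two points it glosses over---the uniqueness of the apex $\Omega = \bigcap_i H_i$ and the passage from ordered to unordered arrangements via the $\S_k$-action---but the underlying parameter count is identical.
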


\begin{proof}
Note that an arrangement $\mathcal{B} \in \M_\A$ is determined by $\Lambda$ in
the Grassmannian $\G(d,n)$ of $d$-dimensional linear spaces of
$\P^{n}$, together with $k$ points in $\P(\C^{n+1}/\Lambda) \cong
\P^{n-d-1}$ (since $\Lambda$ determines a subspace of dimension $d+1$
in $\C^{n+1}$). So the dimension of $\M_\A$ is $D = \text{dim } \left[ \G(d,n)
\times (\P^{n-d-1})^k \right] = (d+1)(n-d) + k(n-d-1)$.
\end{proof}

As a warm-up, we count the number of $0$-coned generic arrangements through a set of
points.

\begin{theorem} \label{0pencil}
Let $\A$ be a $0$-coned generic arrangement of $k \geq n$ hyperplanes in $\P^n$. Then the number of arrangements combinatorially
equivalent to $\A$ that pass through $kn+n-k$ points in general
position is $$\N_\A = \frac{1}{k!} \binom{k}{n}
\binom{kn+n-k}{(n)^n,(n-1)^{k-n}} = \frac{(kn+n-k)!}{(n!)^{n+1}((k-n)!)((n-1)!)^{k-n}}. $$
\end{theorem}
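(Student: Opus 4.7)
The plan is to combine the geometric description of the moduli space from Lemma \ref{dimlemma} with a combinatorial point-distribution argument. By Lemma \ref{dimlemma} we have $D = \dim \M_\A = n + k(n-1) = kn + n - k$, and a labeled $0$-coned generic arrangement in $\P^n$ is encoded by a common point $\Omega \in \P^n$ together with $k$ hyperplanes through $\Omega$, each specified by a point of $\P(\C^{n+1}/\Omega) \cong \P^{n-1}$. Given $D$ points $q_1,\ldots,q_D$ in general position, my first step is to argue that in any arrangement containing all these points, each $q_j$ lies on exactly one hyperplane $H_i$: if $q_j$ were contained in both $H_i$ and $H_{i'}$, the line $\overline{\Omega q_j}$ would lie in their intersection, and this extra incidence is an algebraic condition excluded by generic position of the $q_j$'s.

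Let $a_i = |\{q_j : q_j \in H_i\}|$. The heart of the argument is to pin down the distribution $(a_1,\ldots,a_k)$. Two inequalities drive this: (i) a hyperplane through $\Omega$ has $n-1$ projective degrees of freedom once $\Omega$ is known, so $a_i \geq n-1$ is required for the arrangement to be isolated; (ii) $\Omega$ itself has $n$ degrees of freedom and is recovered as the common intersection of the hyperplanes, so at least $n$ of the hyperplanes must be pinned down without invoking $\Omega$ as an incidence, forcing at least $n$ of the $a_i$ to satisfy $a_i \geq n$. Setting $m = |\{i : a_i \geq n\}|$ and comparing with $\sum a_i = D$, the chain $kn+n-k = \sum a_i \geq mn + (k-m)(n-1) = kn - k + m$ yields $m \leq n$; combined with $m \geq n$, this forces $m = n$ and equality throughout. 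So exactly $n$ ``distinguished'' hyperplanes carry $n$ of the $q_j$'s each, and the remaining $k-n$ ``ordinary'' hyperplanes carry exactly $n-1$ points each.

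The next step is to verify that each such distribution reconstructs a unique arrangement of the correct lattice type. The $n$ distinguished hyperplanes are each determined by their $n$ generic points; $n$ generic hyperplanes in $\P^n$ meet in a unique point, providing $\Omega$; each ordinary hyperplane is then pinned down by $\Omega$ together with its $n-1$ assigned points. Because the $q_j$ are in general position, $\Omega$ is also generic, and one checks that the induced arrangement in $\P(\C^{n+1}/\Omega) \cong \P^{n-1}$ is a generic arrangement of $k$ hyperplanes, so the intersection lattice matches $\L(\A)$.

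To finish, I count: there are $\binom{k}{n}$ ways to choose the distinguished indices among the $k$ labeled hyperplanes and $\binom{D}{(n)^n,(n-1)^{k-n}}$ ways to distribute the $D$ labeled points into bins of the prescribed sizes. Dividing by $k!$ to remove the hyperplane labeling yields the stated formula. The main obstacle I anticipate is the rigorous exclusion of degenerate distributions---verifying that any configuration with some $a_i \leq n-2$ or some $a_{i_0} \geq n+1$ either violates the sum constraint or imposes additional algebraic relations on the generic $q_j$'s---together with the transversality check that each admissible distribution contributes with multiplicity one, in the spirit of the Kleiman transversality argument used in Lemma \ref{transpts}.
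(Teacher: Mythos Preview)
Your approach is essentially the same as the paper's: both proofs identify the unique admissible point distribution (exactly $n$ hyperplanes carrying $n$ points each and the remaining $k-n$ carrying $n-1$ each), reconstruct the arrangement from it, and then count via the multinomial and divide out the labeling. The paper's argument is in fact terser than yours---it simply asserts that this distribution is ``the only way'' and divides the multinomial by $n!(k-n)!$ (equivalent to your $\tfrac{1}{k!}\binom{k}{n}$)---so your dimension-counting justification for the distribution and your explicit flagging of the transversality issue go somewhat beyond what the paper writes down.
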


\begin{proof}
By Lemma \ref{dimlemma}, $\dim \MLA = kn+n-k$. The only way to obtain
an arrangement combinatorially equivalent to $\A$ through the $kn+n-k$
points is to have $n$ hyperplanes pass through $n$ of the points and
meet in a new point $p$; then each of the remaining $k-n$ of the hyperplanes
pass through the point $p$ and $n-1$ of the remaining given points. Labeling the
set of hyperplanes with $n$ points as $H_1, \ldots, H_n$ and the
remaining hyperplanes $H_{n+1},\ldots, H_k$, we have
$\binom{kn+n-k}{(n)^n,(n-1)^{k-n}}$ labeled arrangements. Dividing by
$n! k-n!$ gives the number of unlabeled arrangements and rearranging
terms gives the desired formula.
\end{proof}

It is tempting to think that the number of $d$-coned generic
arrangements of $k$ hyperplanes in $\P^n$
that pass through $D =
(d+1)(n-d) + k(n-d-1)$ points in general position is just $$\N_\A = \frac{1}{k!} \binom{k}{n}
\binom{D}{(n)^{n-d},(n-d)^{k-n+d}}, $$
but in fact this grossly undercounts the arrangements. In this sense,
the $0$-coned generic arrangement result in Theorem \ref{0pencil} is misleading.

Before stating our result on the enumeration of $d$-coned generic arrangements,
 we quickly review the structure of the intersection ring $A=A(\G(d,n))$ of the
Grassmannian $\G(d,n)$ of $d$-planes in $\P^n$. Given any complete
flag of linear spaces $F: F_0 \subsetneq F_1 \subsetneq \ldots
\subsetneq F_{n} = \P^n$ with $\dim F_j = j$, we have a cellular
decomposition of $\G(d,n)$ into affine cells; for each $(d+1)$-tuple
$(\alpha_0, \alpha_1, \ldots, \alpha_{d})$ of
non-increasing non-negative integers $\alpha_i \leq n-d$ we have an
affine cell $X_{\alpha}(F)$ of codimension $| \alpha | = \sum \alpha_i$. To describe the
$d$-planes $\Lambda$ in $X_{\alpha}(F)$, note that for each $d$-plane
$\Lambda$ the sequence $\dim \Lambda \cap F_j$ is completely
characterized by the $d+1$ places where the dimension increases, the
so-called rank jumps. Now $\Lambda \in X_{\alpha}(F)$ precisely when
the collection of rank jumps is $\{n-d+j-\alpha_j \; : \; 0 \leq j \leq
d\}$. For instance the largest open cell $X_{(0)^{d+1}}$ consists of
$d$-planes in general position with respect to the flag $F$. See
Hatcher \cite{Hatcher} for more details on the cellular decomposition
of $\G(d,n)$. Define the intersection class $\sigma_\alpha \in A$ to be the class of the closure of $X_\alpha(F)$. The set of
$\sigma_\alpha$ form an additive basis for the intersection ring
$A(\G(d,n))$. The multiplicative structure of the intersection ring
is described by the Pieri and Giambelli formulas (see e.g. Gatto
\cite{GattoASC,GattoIMPA} for statements of these results and an
interesting description of the multiplicative structure of
$A(\G(d,n))$ in terms of Hasse-Schmidt derivations). The class
$\sigma_{(0)^{d+1}}$ is the multiplicative identity in $A(\G(d,n))$. We denote
the tuple consisting of $i$ 1's followed by $d+1-i$ 0's by
$1^i$. Similarly, the tuple consisting of $d+1$ copies of $n-d$ is
denoted $(n-d)^{d+1}$. The class $\sigma_{(n-d)^{d+1}}$ is the class of a point and
the class $\sigma_{1^{d+1}}$ is the class of the set of $d$-planes
contained in $F_{n-1} \cong \P^{n-1}$. Though we will not need the
full power of the Pieri and Giambelli formulas, we note that there is
a duality between the classes $\sigma_\alpha$ in the sense that
$$\sigma_\alpha \cdot \sigma_{r((n-d)^{d+1}-\alpha)} =
\sigma_{(n-d)^{d+1}},$$ where $r$ is the operation that reverses a
tuple: $r(\beta_0,\ldots,\beta_{d}) =
(\beta_d,\ldots,\beta_0)$. As well, if $\sigma_{\alpha_1}, \ldots,
\sigma_{\alpha_t}$ are intersection classes with
$\alpha_{1}+\cdots+\alpha_t = \dim \G(d,n) = (n-d)(d+1)$ then their
product has a well-defined degree, denoted $\int_{\G(d,n)}
\sigma_{\alpha_1} \cdots \sigma_{\alpha_t}$, which can be interpreted
as the number of $d$-planes in $X_{\alpha_1} \cap \cdots \cap
X_{\alpha_t}$ if the classes $X_{\alpha_i}$ meet transversally (here
the $X_{\alpha_i}$ can be interpreted as coming from different flags,
in general position with respect to each other).

\begin{theorem} \label{countdpencil} If $\A$ is a $d$-coned generic
  arrangement of $k$ hyperplanes in $\P^n$, then the number of pencils
  combinatorially equivalent to $\A$ that pass through $D=(d+1)(n-d) +
k(n-d-1)$ points in general position is given by
$$ \N_\A = \frac{1}{k!} \sum_{\Gamma}
\sigma^{s} \binom{k}{s_0,s_1,\ldots,s_{d+1}}
\binom{D}{(n-(d+1))^{s_{0}},(n-d)^{s_{1}},\ldots,(n)^{s_{d+1}}}, $$
where $$\Gamma = \left\{(s_0,\ldots,s_{d+1}) \in \NN^{d+2}: \sum_{i=0}^{d+1} is_i =
(d+1)(n-d) = \dim \G(d,n),
\sum_{i=0}^{d+1} s_i = k\right\}$$ and $$\sigma^{s} = (\sigma_0)^{s_0}
(\sigma_1)^{s_1} (\sigma_{(1)^2})^{s_2} \cdots
(\sigma_{(1)^{d+1}})^{s_{d+1}},$$
using the notation of Remark \ref{notation}.
\end{theorem}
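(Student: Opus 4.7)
The plan is to count \emph{labeled} $d$-coned generic arrangements passing through $D$ fixed generic points $P_1, \ldots, P_D$ and then divide by $k!$. By Lemma \ref{dimlemma} a labeled arrangement corresponds to a pair $(\Lambda, (\bar H_1, \ldots, \bar H_k))$ with $\Lambda \in \G(d,n)$ and each $\bar H_j$ a hyperplane of the quotient $\P^{n-d-1} = \P(\C^{n+1}/\tilde\Lambda)$, in generic position with respect to each other. For generic $P$'s, each $P_a$ lies on exactly one hyperplane $H_{\phi(a)}$, so each arrangement determines an assignment $\phi: \{1,\ldots,D\} \to \{1,\ldots,k\}$ with $m_j := |\phi^{-1}(j)|$ and $\sum_j m_j = D$. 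Writing $m_j = n-d-1+i_j$, the profile $(s_i)_{0 \leq i \leq d+1}$ defined by $s_i = |\{j : i_j = i\}|$ satisfies $\sum s_i = k$ and (as will emerge from dimension counting) $\sum i s_i = (d+1)(n-d)$.

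The key geometric step is to translate the existence of each $\bar H_j$ containing the projected points $\{\bar P_a : a \in \phi^{-1}(j)\}$ into a Schubert condition on $\Lambda$. Such a $\bar H_j$ exists iff the $m_j$ projected points lie on a common hyperplane of $\P^{n-d-1}$; equivalently, iff some linear form on $\C^{n+1}$ vanishes on both $\tilde\Lambda$ and the $m_j$ points. A short linear-algebra computation shows this happens exactly when $\Lambda$ meets the projective span $V_j$ of these $m_j$ points (a $(n-d-2+i_j)$-plane in $\P^n$) in dimension at least $i_j - 1$. Comparing with the rank-jump description of $X_{1^{i_j}}(F)$ preceding the theorem, this is precisely the Schubert condition with class $\sigma_{1^{i_j}} \in A(\G(d,n))$ of codimension $i_j$.

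Since the $P$'s are generic, the flags attached to the $V_j$'s are in general position relative to each other, so Kleiman's Transversality Theorem applies: the Schubert cycles meet transversely and the number of valid $\Lambda$ for a fixed assignment equals the intersection number encoded by $\sigma^s$. Dimension counting (total codimension must equal $\dim \G(d,n)$) forces $\sum i_j = (d+1)(n-d)$ and gives the index set $\Gamma$. For each valid $\Lambda$ the $k$ hyperplanes $\bar H_j$ are uniquely recovered as the unique hyperplanes of $\P^{n-d-1}$ through the $m_j$ projected points. The number of labeled arrangements is therefore
\[ \sum_{(s_i) \in \Gamma} \binom{k}{s_0,\ldots,s_{d+1}} \binom{D}{(n-d-1)^{s_0},\ldots,(n)^{s_{d+1}}} \sigma^s, \]
where the first multinomial counts ways to assign the size profile $(s_i)$ to the labeled hyperplanes and the second distributes the $D$ labeled points among those labeled hyperplanes with the prescribed sizes. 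Dividing by $k!$ recovers the stated formula.

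The main obstacle is identifying the excess-incidence condition with the Schubert class $\sigma_{1^i}$: one needs to verify carefully that ``$m_j$ projected points lie on a hyperplane of $\P^{n-d-1}$'' translates to ``$\dim(\Lambda \cap V_j) \geq i_j - 1$'' and that the corresponding subvariety of $\G(d,n)$ is indeed $X_{1^{i_j}}$ in the paper's conventions. A secondary subtlety is ruling out contributions from $\Lambda$ whose associated hyperplanes fail to form a genuine generic $d$-coned arrangement (e.g., coincident $\bar H_j$ or hyperplanes in special position), which is handled by checking that such degenerate loci have strictly smaller dimension than $\dim \M_\A$ and hence cannot contain $D$ generic points.
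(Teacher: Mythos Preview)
Your argument is correct and reaches the formula, but by a route that differs meaningfully from the paper's. The paper works in the Chow ring of the product $\G(d,n)\times(\P^{n*})^k$: it first determines the class of the incidence correspondence $Z=\{(\Lambda,H):\Lambda\subset H\}$ by the method of test classes, obtaining $[Z]=\sum_{\ell=0}^{d+1}\sigma_{(1)^{d+1-\ell}}h^{\ell}$, then argues that $\bigcap_i Z_i$ is a local complete intersection so that $[\bigcap Z_i]=\prod[Z_i]$, and finally multiplies by $(h_1+\cdots+h_k)^D$ and expands. Your approach never introduces the $h$-variables at all: you fix an assignment of points to hyperplanes, translate ``the $m_j$ projected points lie on a hyperplane of $\P^{n-d-1}$'' into the single Schubert condition $\dim(\Lambda\cap V_j)\geq i_j-1$ on $\G(d,n)$, identify its class as $\sigma_{1^{i_j}}$, and invoke Kleiman transversality directly on $\G(d,n)$. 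In effect you are reading off, term by term, the geometric meaning of the coefficients that the paper obtains only after computing $[Z]$ and expanding the product; your route is more elementary (no Chow ring of a product, no local-complete-intersection step), while the paper's is more uniform and packages the transversality once rather than assignment by assignment.

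Two small points you pass over quickly deserve one sentence each in a final write-up. First, the restriction $0\leq i_j\leq d+1$: the upper bound is immediate (no hyperplane contains $n+1$ generic points), but the lower bound needs the observation that if some $m_{j_0}<n-d-1$ then the remaining Schubert conditions have total codimension $\sum_{j\neq j_0} i_j>(d+1)(n-d)=\dim\G(d,n)$ and are generically empty. Second, the claim that $\bar H_j$ is \emph{uniquely} recovered requires that the $m_j$ projected points span exactly a hyperplane of $\P^{n-d-1}$; this holds on the open Schubert cell (where $\dim(\tilde\Lambda\cap\tilde V_j)=i_j$ exactly), and transversality guarantees the finitely many solution $\Lambda$'s lie there.
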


\begin{proof} We first note that the intersection ring $A(\G(d,n) \times (\P^{n*})^k)$ is just the tensor product of $A(\G(d,n))$ with $A((\P^{n*})^k)$. Now we determine the class $[Z] \in A(\G(d,n) \times \P^{n*})$, where $Z$ is
the closed set determining an incidence correspondence,
$$ Z = \{(\Lambda, H) \in \G(d,n) \times \P^{n*} \; : \; \Lambda \subset
H \}.$$ Note that $\G(d,n) \times \P^{n*}$ has dimension $(d+1)(n-d)+n$ and $Z$ has
dimension $(d+1)(n-d)+(n-d-1)$ so $Z$ has codimension $d+1$. It
follows that
$$[Z] = \sum_{\{\alpha: \; |\alpha| \leq d+1\}} a_\alpha
\sigma_{\alpha} h^{d+1-|\alpha|},$$
where $\sigma_{\alpha}$ and $h$ denote the pullbacks of these classes
to $A(\G(d,n) \times \P^{n*})$ and the $a_\alpha$ are integers.

We multiply $[Z]$ by $\sigma_{r((n-d)^{d+1}-\alpha)} h^{n-d-1+|\alpha|}$
to obtain $a_\alpha \sigma_{(n-d)^{d+1}}h^n$, representing $a_\alpha$
elements $(\Lambda,H) \in Z$ that also meet the flag $F$ in a manner
prescribed by the class $\sigma_{r((n-d)^{d+1}-\alpha)}
h^{n-d-1+|\alpha|}$. That is, $\Lambda$ has rank jumps at positions
$n-d-(n-d-\alpha_d) = \alpha_d, 1+\alpha_{d-1}, \ldots, d+\alpha_0$
(or earlier) and we can assume that $H$ must pass through $n-d-1+|
\alpha | $ points in general position.

Since $| \alpha | \leq d+1$ and $\alpha \in \NN^{d+1}$, we see
that if any $\alpha_i > 1$ then there are at least $d- | \alpha
| +2$ trailing zeros in $\alpha$. It follows that the first set of
rank jumps occur at positions $0,1,2, \ldots, d-| \alpha | +
1$. Now $H \supset \Lambda \supset F_{d-| \alpha | + 1}$, which
forces $H$ to pass through $d-| \alpha | + 2$ points in general
position in $F_{d-| \alpha | + 1}$. Together with the previous
$n-d-1+| \alpha | $ points in $H$, this forces $H$ to pass
through $n+1$ points in general position in $\P^n$. Of course this is
not possible, so $a_\alpha = 0$ if any entry of $\alpha$ is greater
than $1$.

On the other hand, if $\alpha = (1)^{d+1-\ell}$ then we multiply $[Z]$ by
$\sigma_{(n-d)^{\ell}(n-d-1)^{d+1-\ell}} h^{n-\ell}$ to get $a_{(1)^{d+1-\ell}}
\sigma_{(n-d)^{d+1}}h^n$, representing $a_{(1)^{d+1-\ell}}$ elements
    $(\Lambda,H)\in Z$ such that $\Lambda$ has rank jumps at positions
    in $0, 1, \ldots, \ell-1, \ell+1, \ldots, d+1$ (or earlier) and $H$
    contains $n-\ell$ points in general position. But then $H \supset
\Lambda \supset F_{\ell-1}$ and together with the conditions imposed
    by the $n-\ell$ points, $H$ is uniquely determined. Then $\Lambda = H
\cap F_{d+1}$ is also uniquely determined so $a_\alpha =
a_{(1)^{d+1-\ell}}=1$ (in the case $\ell=d+1$, i.e. $\alpha = (0)^{d+1}$,
    $\Lambda = F_d$ and $H$ is uniquely determined by $H \supset
\Lambda = F_d$ and the point conditions). It follows that the
    class $[Z] \in A(\G(d,n)\times \P^{n*})$ is
$$[Z] = \sum_{\ell=0}^{d+1} \sigma_{(1)^{d+1-\ell}} h^{\ell}. $$

Now we return to our main result and determine the number of
hyperplane arrangements consisting of $k$ hyperplanes in $\P^n$ that
all contain a $d$-dimensional linear space and pass through $D$ points
in general position. Let $$ Z_i = \{(\Lambda, H_1, \ldots, H_k) \in
\G(d,n) \times (\P^{n*})^k \; : \; \Lambda \subset H_i \} $$ and
write $h_i$ and $\sigma_\alpha$ for the pull-backs of the obvious
classes to $\G(d,k) \times (\P^{n*})^k$. Note that $[Z_i]
=\sum_{\ell=0}^{d+1} \sigma_{(1)^{d+1-\ell}} h_i^{\ell} \in A( \G(d,n) \times
(\P^{n*})^k)$ and $Z_i$ is a local complete intersection.

Now we show that $\bigcap_{i=1}^kZ_i$ is a local complete intersection. The dimension of the ambient space is $\dim \G(d,n) + nk$ and the dimension of $\bigcap_{i=1}^kZ_i$ is $\dim \G(d,n) + (n-(d+1))k$, showing that $\bigcap_{i=1}^kZ_i$ has codimension $k(d+1)$. Since each $Z_i$ has codimension $d+1$ and is a local complete intersection, their intersection is also a local complete intersection. Again using Theorem 5.10 of \cite{EH}, we have that on the level of classes $$\left[ \bigcap_{i=1}^kZ_i\right] =[Z_1][Z_2]\cdots [Z_k].$$

Let $P_1,\ldots, P_D$ be points in general position in $\P^n$ and for
each $P_i$ let $$ Y_i = \{(\Lambda, H_1,\ldots, H_k)
\in \G(d,n) \times (\P^{n*})^k \; : \; P_i \in H_1 \cup \cdots \cup H_k \}. $$ Note that $[Y_i] =
h_1+\cdots+h_k.$ Again from Lemma \ref{transpts} and using Kleiman's Transversality Theorem (see Kleiman's fundamental paper \cite{Kleiman}) we have that at the level of classes $$\left[\left( \cap_{i=1}^k Z_i \right) \cap \left( \cap_{j=1}^{D} Y_j
\right) \right] = \left[\left( \cap_{i=1}^k Z_i \right) \right] \left[\left( \cap_{j=1}^{D} Y_j
\right)\right] = [Z_1][Z_2]\cdots [Z_k][Y_1][Y_2]\cdots [Y_D].$$

Now
$$ \begin{array}{lll} \left[\left( \cap_{i=1}^k Z_i \right) \cap \left( \cap_{j=1}^{D} Y_j
\right)\right] & = & [Z_1] \cdots [Z_k] [Y_1] \cdots [Y_D] \\
& = & \left[ \prod_{i=1}^k \left( \sum_{\ell=0}^{d+1} \sigma_{(1)^{d+1-\ell}} h_i^{\ell} \right) \right] \left(
h_1+\cdots+h_k \right)^{D} \\
& = & \sum_{\Gamma}
\sigma^{s} \binom{k}{s_0,s_1,\ldots,s_{d+1}}
\binom{D}{(n-(d+1))^{s_{0}},(n-d)^{s_{1}},\ldots,(n)^{s_{d+1}}},
\end{array} $$
where $\Gamma = \{(s_0,\ldots,s_{d+1}) \in \NN^{d+2}: \sum_{i=0}^{d+1} is_i =
(d+1)(n-d) = \dim \G(d,n),
\sum_{i=0}^{d+1} s_i = k\}$ and $$\sigma^{s} = (\sigma_0)^{s_0}
(\sigma_1)^{s_1} (\sigma_{(1)^2})^{s_2} \cdots
(\sigma_{(1)^{d+1}})^{s_{d+1}}.$$
Since each unordered arrangement of hyperplanes gives $k!$
arrangements of labeled hyperplanes, we divide by $k!$ to obtain the
correct degree $\N_\A$. Since the points $P_i$ are in general position,
Kleiman's transversality theorem assures us that $\left( \cap_{i=1}^k Z_i \right) \cap \left( \cap_{j=1}^{D} Y_j
\right)$ is a finite collection of points, each appearing with multiplicity one. So we can interpret the above computation as saying that there are $N_\A$ $d$-coned generic arrangements of $k$ hyperplanes in $\P^n$ that pass through $D$ points in general position.
\end{proof}

It would be interesting to study the enumerative geometry of
hyperplane arrangements in a combinatorial equivalence class different
from the generic arrangements and $d$-coned generic arrangements. In general this seems
to require some subtle intersection theory computations. In
particular, it would be interesting to see if intersection theory
computations on the moduli space of stable maps can deal with the
enumerative geometry of more general families of line arrangements.

\begin{remark}
  The result in Theorem \ref{countdpencil} takes a particularly nice
  shape when $d=1$. In this case $\sigma^{s} = (\sigma_0)^{s_0}
(\sigma_1)^{s_1} (\sigma_{11})^{s_2}$ counts the number of lines
  that are in $s_2$ general hyperplanes in $\P^n$ and meet $s_1$
  general codimension-$2$ planes. Cutting down by the hyperplanes,
  this is the number of lines in $\P^{n-s_2}$ that meet $s_1 =
2(n-1)-2s_2$ codimension-$2$ planes. This was one of the classical
  problems studied by Schubert \cite{Schubert}, who showed that
  $\sigma^s = C_{n-s_2}$, where $C_N = \frac{1}{N}\binom{2N-2}{N-1}$
  defines the sequence of Catalan numbers (with the index shifted to
  start at 1).
\end{remark}

\begin{example}
The number of $1$-coned generic arrangements of $9$ hyperplanes in $\P^5$ that go through $35$ points is
$$ \begin{array}{lll} N_\A & = & \frac{1}{9!} \left[
\binom{9}{5,0,4}\binom{35}{(3)^5(4)^0(5)^4} +
\binom{9}{4,2,3}\binom{35}{(3)^4(4)^2(5)^3} +
2\binom{9}{3,4,2}\binom{35}{(3)^3(4)^4(5)^2} +\right. \\
& & \phantom{\frac{1}{9!} [[} \left. 5\binom{9}{2,6,1}\binom{35}{(3)^2(4)^6(5)^1} +
14\binom{9}{1,8,0}\binom{35}{(3)^1(4)^8(5)^0} \right] \\
& = & 148,467,792,706,702,950,173,442,750.
\end{array} $$
\end{example}

\bibliographystyle{plain}
\bibliography{counting}
\end{document}